\title{Hat Guessing on Books and Windmills}
\author{Xiaoyu He\thanks{Dept. of Mathematics, Stanford University, {\tt alkjash@stanford.edu}. Research supported by NSF Graduate Research Fellowship Grant No. DGE-1656518.} \and Yuzu Ido\thanks{Stanford University, {\tt yuzu@stanford.edu}.} \and Benjamin Przybocki\thanks{Stanford University, {\tt benprz@stanford.edu}.}}
\date{October 2020}
\newcommand{\Z}{\mathbb{Z}}
\newcommand{\N}{\mathbb{N}}
\newcommand{\HG}{\mathrm{HG}}
\newcommand{\bHb}{\text{B}}
\newcommand{\digit}[3]{\text{d}_{#1} ({#2},{#3})}
\newcommand{\ruler}[2]{\nu_{#1} ({#2})}
\DeclarePairedDelimiter\floor{\lfloor}{\rfloor}
\theoremstyle{definition}
\newtheorem{definition}{Definition}[section]
\theoremstyle{plain}
\newtheorem{theorem}[definition]{Theorem}
\newtheorem{question}[definition]{Question}
\newtheorem{lemma}[definition]{Lemma}
\newtheorem{conjecture}[definition]{Conjecture}
\theoremstyle{remark}
\newtheorem*{claim}{Claim}
\begin{document}

\maketitle

\begin{abstract}
    The hat-guessing number is a graph invariant defined by Butler, Hajiaghayi, Kleinberg, and Leighton. We determine the hat-guessing number exactly for book graphs with sufficiently many pages, improving previously known lower bounds of He and Li and exactly matching an upper bound of Gadouleau. We prove that the hat-guessing number of $K_{3,3}$ is $3$, making this the first complete bipartite graph $K_{n,n}$ for which the hat-guessing number is known to be smaller than the upper bound of $n+1$ of Gadouleau and Georgiou. Finally, we determine the hat-guessing number of windmill graphs for most choices of parameters.
\end{abstract}

\section{Introduction} \label{sec-intro}
Hat-guessing games are combinatorial games in which players try to guess the colors of their own hats. In the variant we study, defined by Butler, Hajiaghayi, Kleinberg, and Leighton~\cite{butler2008hat}, each player is assigned 1 of $q$ possible hat colors and is placed at a vertex of a graph $G$. Players can see the hat colors of the players at adjacent vertices, but not their own. Players can communicate to design a collective strategy before the hats are assigned by the adversary. Once hats are assigned, the players must simultaneously guess the colors of their own hats, and they collectively \textit{win} if at least one player guesses correctly.
\begin{definition} \label{def-HG}
    The \emph{hat-guessing number} of the graph $G$, denoted $\HG(G)$, is the largest number of hat colors $q$ for which the players can guarantee a win in the hat-guessing game on $G$.
\end{definition}
This version of the hat-guessing game has found connections to derandomizing auctions~\cite{aggarwal,benzwi} and recently to coding theory and finite dynamical systems~\cite{gadouleau}.

The most famous special case of the hat-guessing game, where $G=K_n$ is the complete graph, was popularized by Winkler~\cite{winkler} in one of his beautiful puzzle collections. Here, $n$ players can all see each other, and the game is to show that $\HG\left(K_n\right) = n$. The strategy which wins on $n$ colors is as follows: identify the hat colors with the set\footnote{Hereafter, $[q]$ denotes the set $\{0, 1, \dots, q-1\}$.} $[n]$. Player $i$ guesses the hat color that would make the sum of all the hat colors $i \pmod n$. Since the actual sum of everyone's hat colors must take some value in $\mathbb{Z}/n\mathbb{Z}$, exactly one player will guess correctly. Conversely, it is not difficult to show that the players cannot guarantee a win when $n+1$ colors are available for the adversary.

The hat-guessing numbers of graphs other than the complete graph have proven surprisingly difficult to compute. The value of $\HG(G)$ has been determined for trees~\cite{butler2008hat}, cycles~\cite{szczechla}, extremely unbalanced complete bipartite graphs~\cite{alon2018hat}, and certain tree-like degenerate graphs~\cite{he2020hat}, but outside of these very specific families little is known. In this paper we add to this list of solved graphs almost all books and windmills, as well as the graph $K_{3,3}$, which is in some sense the first ``interesting'' complete bipartite graph for this problem.

The {\it book graph} $B_{d, n}$ is obtained by adding $n$ nonadjacent common neighbors to the complete graph $K_d$. The $d$-clique is called the \emph{spine} of $B_{d, n}$ and the other $n$ vertices are called its \emph{pages}. Book graphs were originally studied by Bosek, Dudek, Farnik, Grytczuk, and Mazur~\cite{bosek2019hat} in this context. They are examples of $d$-degenerate graphs for which $\HG(G)$ can be exponentially large in $d$. Independently, Gadouleau~\cite[Theorem~3]{gadouleau} proved a general upper bound that implies
\begin{equation}\label{eq:gadouleau}
\HG(G) \leq 1 + \sum_{i = 1}^{\tau(G)} i^i, 
\end{equation}
where $\tau(G)$ is the size the minimum vertex cover of $G$. As $B_{d,n}$ is the unique maximal graph on $n$ vertices with $\tau(G)=d$, determining $\HG(B_{d,n})$ is actually equivalent to finding the best possible upper bound on $\HG(G)$ in terms of $\tau(G)$. Our first main result is that Gadouleau's upper bound~(\ref{eq:gadouleau}) is tight for books, and thus best possible.
\begin{theorem} \label{thm-book}
    For $d \ge 1$ and $n$ sufficiently large in terms of $d$, $\HG\left(B_{d, n}\right) = 1 + \sum_{i = 1}^{d} i^i$.
\end{theorem}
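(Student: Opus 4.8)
The plan is to prove the two inequalities separately. The upper bound $\HG(B_{d,n}) \le 1 + \sum_{i=1}^d i^i$ is already available from Gadouleau's bound~(\ref{eq:gadouleau}), since $B_{d,n}$ has a vertex cover of size $d$ (its spine), so all the work is in the matching lower bound: exhibiting a winning strategy for the guessers with $q := 1 + \sum_{i=1}^d i^i$ colors, provided $n$ is large enough in terms of $d$. I will think of the spine vertices as $v_1,\dots,v_d$ and the pages as $p_1,\dots,p_n$; a page only sees the spine, and each spine vertex sees all pages and all other spine vertices.

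First I would set up the strategy from the spine's point of view. Partition the color set $[q]$ into blocks $\{*\} \cup B_1 \cup \cdots \cup B_d$ where $|B_i| = i^i$ and $*$ is a single distinguished ``failure'' color. The key idea: the spine vertices collectively try to ``certify'' which block the adversary has committed to on the pages, by using the fact that with $n$ huge, among the $n$ pages there must be many pages sharing the same color-tuple on their view --- but pages all have the same view (the whole spine), so in fact every page sees exactly the same thing and must make the same guess given the same strategy. This forces a cleaner structure: WLOG each page runs a (possibly different) deterministic function of the spine-coloring, and since there are $n$ pages but the spine has only $q^d$ colorings, if $n > q^d \cdot (\text{something})$ we can find structure. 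Actually the cleaner approach, following He--Li type arguments: recursively build the strategy on $d$. For $d=1$, $B_{1,n}$ is a star with $n$ leaves plus center, and one checks directly $\HG(B_{1,n}) = 2$ for $n \ge 2$ (center guesses using a fixed leaf, or: two colors, the center and each leaf can coordinate). The recursive step is where the $i^i$ terms come from: reserve block $B_d$ of size $d^d$ for a ``direct'' strategy in which, conditioned on the spine-coloring lying in a certain configuration, the $d$ spine vertices together with sufficiently many pages simulate a winning strategy on $K_{d+1}$ (which wins on $d+1$ colors) amplified across the $d^d$ residues --- this is exactly the trick that lets a $d$-clique plus pendant pages beat $d+1$ colors by a factor related to $d^d$. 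If the spine-coloring is NOT in that configuration, peel off one spine vertex: the remaining $d-1$ spine vertices plus the pages form a copy of $B_{d-1,n}$ on the color set $[q] \setminus B_d$, which has size $1 + \sum_{i=1}^{d-1} i^i$, and apply induction.

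More concretely, here is the mechanism I would use for the ``direct'' block. Fix the large page set. The $d$ spine vertices each see all $n$ page colors. Think of the pages as being assigned an element of $B_d \cong [d]^d$ (or the failure color $*$, handled by recursion). Group the pages into $d$ groups $G_1,\dots,G_d$ of size $\ge d^{d-1}$ each (using $n$ large). The spine vertex $v_j$, using the colors on group $G_j$, should be able to guess its own color assuming everyone (spine and the relevant pages) is colored in $B_d$; the combinatorial content is that the $d$ coordinate-projections of $[d]^d$ give $d$ independent ``$\bmod\ d$'' constraints, and a careful pigeonhole on the large groups lets each $v_j$ read off the ``guess parameter'' it needs. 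Simultaneously each page $p$, seeing the entire spine, guesses so that IF the spine-coloring encodes (via a prearranged surjection from spine-colorings in $B_d^d$ onto $[d]^d$) the index it ``should have'', then $p$ is correct --- so if the spine genuinely lies in $B_d$ and the spine-coloring is the ``right'' one for $p$'s group, $p$ wins. Balancing these: if the adversary keeps all pages in $B_d$, either some spine vertex guesses right (via the $K_{d+1}$-style $\bmod\ d$ argument amplified over $d^d$ cosets) or enough pages are ``aligned'' that one of them guesses right; if the adversary puts even one page outside $B_d$, or puts a spine vertex outside the anticipated pattern, the strategy ``escapes'' to the recursive part on the smaller color set.

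The main obstacle I expect is making the recursion bookkeeping consistent: the guessers must commit to one strategy \emph{before} seeing anything, so the ``if the spine-coloring is in configuration $X$, do this, else recurse'' dichotomy has to be realized as a single deterministic guess function per vertex, with the blocks $B_1,\dots,B_d$ and the failure color carved out so that the direct strategy on $B_d$ and the recursive strategy on $[q]\setminus B_d$ never interfere --- in particular a spine vertex whose color lies in $B_d$ must still play a sensible guess in the recursive branch, and vice versa. Getting the ``$n$ sufficiently large'' quantifier right (it will be something like a tower or at least exponential in $d$, since each recursion level multiplies the page requirement by roughly $q^{d}$) and verifying that the pigeonhole groups are simultaneously large enough at every level is the delicate part; the algebra of the $\bmod\ d$ / $[d]^d$ amplification itself is the routine core once the framework is in place. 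I would isolate the direct-block construction as a standalone lemma (``a $d$-clique with enough pendant pages gains a factor of $d^d$ over the clique number on a fresh color block'') and then the theorem follows by summing the geometric-type recursion, with the base case $d=1$ checked by hand.
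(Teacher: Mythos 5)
Your treatment of the upper bound is fine and matches the paper (it is exactly Gadouleau's bound~(\ref{eq:gadouleau}) applied to the spine as a vertex cover). The lower bound, however, has a genuine gap at its core. The paper does not build a strategy from scratch: it invokes the equivalence of He and Li~\cite{he2020hat} that for large $n$, $\HG(B_{d,n})-1$ equals the largest $t$ such that \emph{every} $t$-subset of $\N^d$ is coverable, and then proves that every set of size $\sum_{i=1}^d i^i$ is coverable (Lemmas~\ref{lemma-cov-iff-numcov}--\ref{lemma-sum-numcov}). The word ``every'' is the crux: because the reduction is an equivalence, any winning strategy must implicitly cover an \emph{arbitrary} adversarially chosen set of candidate spine-colorings, not just the block-structured, product-like configurations your strategy is organized around. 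Your ``direct block'' lemma --- that a $d$-clique with enough pages gains a factor of $d^d$ via ``$d$ independent $\bmod\ d$ constraints and a careful pigeonhole'' --- is precisely the step that cannot be routine: the modular trick does cover the honest hypercube $[d]^d$, but the adversary is free to concentrate the surviving colorings on an arbitrary $d^d$-point set, and the fact that every such set is coverable is proved in the paper by combining Hall's Marriage Theorem with the Loomis--Whitney inequality~\cite{loomis} (Lemma~\ref{lemma-dTod-coverable}). Nothing in your sketch substitutes for that inequality, and the fact that the previously best known bound was only $(d+1)!$~\cite{he2020hat} is evidence that no elementary amplification of the $K_{d+1}$ strategy reaches $d^d$.

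The recursive part has the same problem in sharper form. Your peeling argument mirrors the shape of the \emph{extremal} non-coverable set (a hypercube with a lower-dimensional piece glued to one face, as in Lemma~\ref{lem-bk-upper}), but for the lower bound one must handle all sets of size up to $\sum_{i=1}^d i^i$, and the paper needs the Lev--Rudnev classification~\cite{lev} of projection-minimizing sets to reduce the worst case to that extremal shape before recursing (Lemma~\ref{lemma-sum-numcov}). Your dichotomy ``spine coloring in configuration $X$ versus recurse'' does not confront mixed configurations (some spine vertices colored in $B_d$, others not, pages colored arbitrarily across blocks), which you flag as ``the main obstacle'' but do not resolve; in the coverability language these are exactly the unstructured sets the argument must handle. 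In short, the architecture of your proof is plausible and echoes the extremal construction, but the two load-bearing steps --- the $d^d$ gain and the $i^i$ increments --- are asserted rather than proved, and the known proofs of both require nontrivial geometric inputs that a modular-arithmetic-plus-pigeonhole argument does not supply.
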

It was shown by~\cite{bosek2019hat} that $\HG(B_{d,n}) \ge 2^d$ for sufficiently large $n$ in terms of $d$ by reducing upper bounds on $\HG(B_{d,n})$ to a certain geometric problem about counting projections in $\mathbb{N}^d$. He and Li~\cite{he2020hat} showed that this geometric problem is actually equivalent to determining $\HG(B_{d,n})$ for $n$ sufficiently large and improved the lower bound to $\HG\left(B_{d, n}\right) \geq (d+1)!$. Our proof of Theorem~\ref{thm-book} solves the equivalent geometric problem completely using Hall's Marriage Theorem. 

Perhaps the most well-studied case of the hat-guessing game is the complete bipartite case. In the paper defining the hat-guessing game~\cite{butler2008hat}, it was proved that for large $n$, $\HG\left(K_{n,n}\right) = \Omega(\log \log n)$. Later, Gadouleau and Georgiou~\cite{gadgeor} proved that $\Omega(\log n) \le \HG\left(K_{n,n}\right) \le n+1$, and most recently, Alon, Ben-Eliezer, Shangguan, and Tamo~\cite{alon2018hat} improved the lower bound to $\HG\left(K_{n,n}\right) = \Omega(n^{\frac{1}{2} - o(1)})$. However, the exact value of $\HG\left(K_{n,n}\right)$ was only known in the cases $n=1,2$. Our next result solves the problem for $n=3$. 
\begin{theorem} \label{thm-k33}
    For the complete bipartite graph $K_{3,3}$, we have $\HG\left(K_{3,3}\right) = 3$.
\end{theorem}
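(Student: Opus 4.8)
The proof has two parts. The lower bound $\HG(K_{3,3}) \ge 3$ is essentially known: $K_{3,3}$ contains a four-cycle $C_4 \cong K_{2,2}$ as a (non-spanning) subgraph, $\HG$ is monotone under subgraphs (the two extra vertices may guess a fixed color), and $\HG(K_{2,2}) = 3$ is known. So everything hinges on the upper bound: with $q = 4$ colors the adversary wins. Write $A = \{a_1,a_2,a_3\}$, $B = \{b_1,b_2,b_3\}$ for the two sides and identify a coloring of $A$ with $x \in [4]^3$ and of $B$ with $y \in [4]^3$. A strategy for side $A$ is exactly a function $G_A \colon [4]^3 \to [4]^3$ (player $a_i$ sees $y$ and guesses $G_A(y)_i$), and similarly $G_B$ for side $B$. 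For $z \in [4]^3$ put $D(z) = \{w \in [4]^3 : w_i \ne z_i \text{ for all } i\}$, a ``box'' of $3^3 = 27$ colorings whose complement is $\{w_1 = z_1\} \cup \{w_2 = z_2\} \cup \{w_3 = z_3\}$ (of size $37$); note that any two such boxes $D(u), D(v)$ share at least $2^3 = 8$ points. The coloring $(x,y)$ defeats side $A$ exactly when $x \in D(G_A(y))$, and defeats side $B$ exactly when $y \in D(G_B(x))$, so the adversary wins iff some $(x,y)$ does both.

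First I would carry out a reduction to a purely combinatorial statement. Fix $G_A$ and a coloring $x$ of $A$; the colorings $y$ for which $(x,y)$ defeats side $A$ form the set $G_A^{-1}(D(x))$, and since $G_B(x)$ is a single vector, side $B$ can catch all of them iff $G_A^{-1}(D(x))$ avoids some box $D(v)$, i.e.\ iff $G_A^{-1}(D(x))$ is contained in a set of the shape $\{w_1 = c_1\} \cup \{w_2 = c_2\} \cup \{w_3 = c_3\}$; call such a set \emph{hyperplane-coverable}. Running this over all $x$ shows: the players win with $4$ colors if and only if there is a function $G_A \colon [4]^3 \to [4]^3$ with $G_A^{-1}(D(x))$ hyperplane-coverable for every $x \in [4]^3$ (given such a $G_A$, let $G_B(x)$ be a witnessing $v$). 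So the task becomes to show that no such $G_A$ exists.

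The plan for this is to work with \emph{obstructions}: minimal non-hyperplane-coverable subsets of $[4]^3$. A set is hyperplane-coverable iff it contains no obstruction, so it suffices to exhibit, for every $G_A = (g_1,g_2,g_3)$, an obstruction $S$ with $S \subseteq G_A^{-1}(D(x))$ for some $x$ --- equivalently with $\bigcap_{s \in S} D(G_A(s)) \ne \emptyset$, equivalently such that none of $g_1|_S, g_2|_S, g_3|_S$ is surjective onto $[4]$; any $x$ in that nonempty intersection is then a coloring of $A$ that side $B$ cannot handle, and the adversary completes it to a winning coloring using the nonempty overlap of boxes. Step one is to classify the obstructions, which are small: the size-$4$ obstructions are exactly the ``Latin'' $4$-sets, whose three coordinate projections are all bijections onto $[4]$, of which there are $(4!)^2$; and there are only finitely many larger shapes up to permuting coordinates and colors. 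Enumerating these larger shapes (for instance $\{(0,0,0),(0,1,1),(1,0,1),(1,1,0),(2,2,2)\}$ is one) is genuinely needed, since against $G_A = \mathrm{id}$ every Latin $4$-set is already ``rainbow'' for the first coordinate function $g_1$, so the adversary must use a bigger obstruction there. Step two is a case analysis on how the three coordinate functions of $G_A$ distribute values over the subcubes of $[4]^3$, exploiting the freedom of choosing which obstruction to beat $G_A$ with.

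The main obstacle is precisely this combinatorial core. Counting by itself is useless: the sets $G_A^{-1}(D(x))$ have average size $27$ and their complements average size $37$, and a $27$-element box fits inside a $37$-element set on cardinality grounds, so nothing follows from sizes alone --- indeed the overlap property of boxes is exactly what lets side $B$ occasionally discharge many colorings with a single guess, and is what must be circumvented. So the crux, and the part I expect to be most delicate, is (a) the complete classification of obstructions in $[4]^3$ (small enough to enumerate, perhaps with computer assistance) and (b) structuring the case analysis on $G_A$ so that it terminates.
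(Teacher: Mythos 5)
Your lower bound and your reduction of the upper bound are both correct: the reformulation ``the players win with $4$ colors iff there is a map $G_A\colon [4]^3\to[4]^3$ such that every preimage $G_A^{-1}(D(x))$ is contained in a union of three axis-parallel hyperplanes'' is exactly the dual of the paper's Lemma~\ref{lemma-cube-condition} (there the fibers of the \emph{right-hand} players' guessing functions are the three partitions $P,Q,R$, and the condition is that $P_i\cup Q_j\cup R_k$ always contains a $3\times3\times3$ cube). Your observations about obstructions are also accurate: the size-$4$ minimal non-hyperplane-coverable sets are precisely the $(4!)^2$ Latin $4$-sets, larger minimal obstructions exist and are genuinely needed (your $5$-point example is a valid one), and the condition $\bigcap_{s\in S}D(G_A(s))\neq\emptyset$ is correctly translated into non-surjectivity of each coordinate function on $S$.

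The genuine gap is that the combinatorial core --- the only part of the theorem with real content --- is not carried out. You explicitly defer both (a) the classification of all minimal obstructions and (b) the case analysis showing that every $G_A$ admits one inside some $\bigcap_{s\in S}D(G_A(s))$-compatible preimage, and you give no argument for why the case analysis on $G_A$ (a function from a $64$-point domain to a $64$-point codomain, so $4^{192}$ candidates) terminates or even gets off the ground; ``distribute values over the subcubes'' is not yet a proof strategy. For comparison, the paper closes this gap by a different and fully executed route: it orders the parts of the partition $P$ by size, uses two finite-check lemmas (Lemma~\ref{lemma-cube-or-minusPoint}: four $3\times3\times3$ cubes two-intersecting in at most $29$ points do so in a cube or a cube minus a point; Lemma~\ref{lemma-three-cubes}: three cubes two-intersect in at least $20$ points) to force $P$ to be balanced into four $16$-sets each lying in a $2\times3\times3$ prism, and then shows four such sets cannot partition $[4]^3$. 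That argument never needs a classification of obstructions, and its finite case checks are confined to configurations of at most four cubes rather than arbitrary strategies. As it stands, your proposal is a correct reduction plus a plausible plan, not a proof; to complete it you would need to either supply the obstruction classification and a terminating analysis of $G_A$, or pivot to a counting/structural argument of the kind the paper uses.
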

This is the first example where the upper bound $\HG\left(K_{n,n}\right) \le n+1$ of~\cite{gadgeor} is known not to be tight, and suggests that $\HG\left(K_{n,n}\right)$ may be smaller than linear in general.

Finally, we consider the hat-guessing numbers of windmill graphs $W_{k,n}$, defined as $n$ disjoint copies of $K_k$ glued together at a single vertex. Thus $W_{k,n}$ has a total of $(k-1)n+1$ vertices. One might initially suspect that $\HG(W_{k,n})$ cannot be much larger than $k$, since except for the central vertex, $W_{k,n}$ consists of $n$ disjoint copies of $K_{k-1}$. We show to the contrary that $\HG(W_{k,n})$ can be almost twice as large as $k$ in general.
\begin{theorem} \label{thm-wd-2k-2}
    For $k \ge 2$ and $n \ge \log_2(2k-2)$, $\HG(W_{k,n}) = 2k-2$.
\end{theorem}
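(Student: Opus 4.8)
The plan is to establish $\HG(W_{k,n}) \le 2k-2$ (for every $n \ge 1$) and $\HG(W_{k,n}) \ge 2k-2$ (under the hypothesis $n \ge \log_2(2k-2)$) separately. Throughout, write $v$ for the central vertex and $B_1,\dots,B_n \cong K_{k-1}$ for the blades, so that every blade vertex is adjacent to $v$ and two blade vertices are adjacent iff they lie in the same blade.

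For the upper bound, suppose $q = 2k-1$ colors are available and fix any strategy. Conditioning on the center's color being some fixed value $a$ turns the strategy of the vertices of a blade $B_i$ into a strategy on $K_{k-1}$ with $q$ colors; since each of its $k-1$ players covers exactly $q^{k-2}$ of the $q^{k-1}$ colorings of $B_i$, the blade covers at most $(k-1)q^{k-2}$ of them given $a$. Hence for two distinct center colors $a \ne a'$, at most $2(k-1)q^{k-2} < q^{k-1}$ colorings of $B_i$ are covered given $a$ or given $a'$, so some coloring $\chi_i$ of $B_i$ is covered by neither. Taking $a=0$, $a'=1$ and choosing such a $\chi_i$ for every $i$, the center's guess on $(\chi_1,\dots,\chi_n)$ is a single color and so misses one of $0,1$, say $b$; then the coloring giving $v$ the color $b$ and each $B_i$ the coloring $\chi_i$ defeats the strategy, since $v$ guesses wrong and every blade vertex guesses wrong (as $\chi_i$ is uncovered given $b$).

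For the lower bound I will exhibit a winning strategy on $q = 2k-2$ colors. Fix a subset $H \subseteq \mathbb{Z}/(2k-2)\mathbb{Z}$ with $|H| = k-1$, and --- using $2^n \ge 2k-2$ --- fix an injection $\gamma \colon [2k-2] \to \{0,1\}^n$ with coordinates $\gamma_1,\dots,\gamma_n$. For each blade $i$ and each potential center color $a$, let $T_i(a)$ be $H$ if $\gamma_i(a)=0$ and the complement $\mathbb{Z}/(2k-2)\mathbb{Z}\setminus H$ otherwise, so $|T_i(a)| = k-1$. The blades play as follows: after seeing $c(v)=a$, the $k-1$ vertices of $B_i$ run the standard modular clique strategy aimed at the $k-1$ targets of $T_i(a)$ (vertex $j$ of $B_i$ guesses so as to force $\sigma_i := \sum_{u\in B_i} c(u) \bmod (2k-2)$ to equal the $j$-th element of $T_i(a)$), so that some vertex of $B_i$ is correct exactly when $\sigma_i \in T_i(a)$. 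The center, seeing all the $\sigma_i$, forms the set $S := \{a : \sigma_i \notin T_i(a)\text{ for all } i\}$ and guesses its unique element when $S \ne \emptyset$. Correctness reduces to the claim $|S| \le 1$: if $a\ne a'$ both lie in $S$, injectivity of $\gamma$ yields a coordinate $i$ with $\gamma_i(a) \ne \gamma_i(a')$, whence $T_i(a)$ and $T_i(a')$ partition $\mathbb{Z}/(2k-2)\mathbb{Z}$ and $\sigma_i$ must lie in one of them, a contradiction. Then for any coloring, either some $\sigma_i \in T_i(c(v))$ and that blade guesses correctly, or $c(v) \in S$, so $S = \{c(v)\}$ and the center guesses correctly.

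The hypothesis $n \ge \log_2(2k-2)$ enters exactly to guarantee the injection $\gamma$ exists, and I expect the main obstacle to be finding this scheme in the first place: the insight that the blades should cover sum-classes drawn from the balanced complementary families $T_i(a)$, and that this forces the center's residual uncertainty to be governed by a binary separating code on the color set. Once the design is in hand the verification is routine --- the modular clique strategy and the counting estimate in the upper bound are both standard --- so the write-up should be short, with most of the effort spent setting up notation for the blades' internal strategies.
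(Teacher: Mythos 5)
Your proof is correct and follows the same architecture as the paper's: a counting upper bound showing two center-colors cannot both be ``uncovered'' by a blade when $q\ge 2k-1$ (this is exactly the paper's combination of Lemma~\ref{lem-windmill-condition} with Lemma~\ref{lem-H}, just unrolled into a direct adversary argument), and a lower bound in which each blade is responsible for one bit of a binary code separating the $2k-2$ possible center colors, using a complementary pair of solvable sets of $K_{k-1}$. The one genuine difference is your realization of that complementary pair: the paper's Lemma~\ref{c-comp} partitions $[2k-2]^{k-1}$ into $2^{k-1}$ subcubes of side $k-1$ and takes the odd-parity ones, then recurses to the $(k-1)$-color clique game, whereas you take $C=\{x: \sum x_j \in H\}$ for a fixed half $H$ of $\mathbb{Z}/(2k-2)\mathbb{Z}$, so that $C$ and $\overline{C}$ are both solvable directly by the modular target strategy of Lemma~\ref{lem-H}. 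Your gadget is arguably cleaner, since it reuses the extremal solvable sets of Lemma~\ref{lem-H} verbatim and makes the blade strategies and the center's decoding rule (the set $S$, which is precisely the disjointness of the paper's sets $P_x$) explicit in one stroke; the paper's subcube version has the mild advantage of exhibiting a complementary solvable pair by a purely ``geometric'' splitting that does not depend on the group structure of the color set. Both verifications are complete, and your use of $n\ge\log_2(2k-2)$ (existence of the injection $\gamma$) matches the paper's use of the binary digits of $x$ for $0\le x\le 2k-3$.
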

Theorem~\ref{thm-wd-2k-2} determines $\HG(W_{k,n})$ when $n$ is sufficiently large. Similar methods work for smaller choices of $n$.
\begin{theorem} \label{thm-wd-dn}
    For any $n\ge 1$ and $d\ge 2$, we have $\HG(W_{d^n - d^{n-1} + 1,n}) = d^n$.
\end{theorem}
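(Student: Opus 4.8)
The plan is to prove the two inequalities $\HG(W_{k,n}) \geq d^n$ and $\HG(W_{k,n}) \leq d^n$ separately, where $k = d^n - d^{n-1} + 1$. Throughout, let $v$ be the central vertex and $B_1,\dots,B_n$ the $n$ sets of $k-1 = d^n - d^{n-1}$ non-central vertices, so each $B_j$ together with $v$ is a copy of $K_k$, and the vertices of $B_j$ see each other and $v$ but nothing else. For the lower bound I would play with $q = d^n$ colors, identified with $[d]^n$ via base-$d$ expansion; write $\pi_j(c)$ for the $j$-th base-$d$ digit of $c$. For each $j$ and each possible color $c$ of $v$, the set $T_j(c) = \{\sigma\in[q] : \pi_j(\sigma)\neq\pi_j(c)\}$ has size exactly $d^n - d^{n-1} = k-1 = |B_j|$. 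Having seen $c_v$, the $k-1$ vertices of $B_j$ list $T_j(c_v)=\{t_1,\dots,t_{k-1}\}$ and run the standard clique strategy tuned to this list: the $i$-th vertex guesses the color that would make $\sum_{u\in B_j}c_u \equiv t_i \pmod q$, so some vertex of $B_j$ is correct iff $\Sigma_j := \sum_{u\in B_j}c_u$ lies in $T_j(c_v)$, i.e.\ iff $\pi_j(\Sigma_j)\neq\pi_j(c_v)$. The central vertex computes $\Sigma_1,\dots,\Sigma_n$ and guesses the unique color $c^{\ast}$ with $\pi_j(c^{\ast})=\pi_j(\Sigma_j)$ for all $j$; if no blade is correct then $\pi_j(c_v)=\pi_j(\Sigma_j)$ for every $j$, forcing $c_v=c^{\ast}$. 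Thus someone is always correct and $\HG(W_{k,n})\geq d^n$.

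For the upper bound I would show the adversary wins with $q = d^n+1$ colors. Fix any player strategy. Once a color $c$ of $v$ is revealed, the vertices of $B_j$ execute some hat-guessing strategy on the clique $K_{k-1}$ with $q$ colors; let $X_j(c)\subseteq[q]^{k-1}$ be the set of colorings of $B_j$ on which that strategy makes some vertex of $B_j$ correct. Since each of the $k-1$ vertices of $B_j$ is correct on exactly $q^{k-2}$ of the $q^{k-1}$ colorings, $|X_j(c)|\leq (k-1)q^{k-2}$. A coloring of $W_{k,n}$ is a loss precisely when $v$ guesses wrong and, for every $j$, the coloring of $B_j$ avoids $X_j(c_v)$; hence if the players win, then for every tuple $(\vec p_1,\dots,\vec p_n)\in\prod_j[q]^{k-1}$ the set $N(\vec p):=\{c\in[q] : \vec p_j\notin X_j(c)\text{ for all }j\}$ has at most one element, since otherwise the adversary makes everyone wrong using the blade colorings $\vec p$ and a central color in $N(\vec p)$ different from the center's guess. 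Choosing the $\vec p_j$ independently and uniformly at random and using independence,
\[
\E\,|N(\vec p)| \;=\; \sum_{c\in[q]}\prod_{j=1}^{n}\Bigl(1-\frac{|X_j(c)|}{q^{k-1}}\Bigr) \;\geq\; q\Bigl(1-\frac{k-1}{q}\Bigr)^{\!n} \;=\; \frac{(q-k+1)^n}{q^{n-1}},
\]
and with $q=d^n+1$ we have $q-k+1=d^{n-1}+1$.

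It therefore suffices to verify the elementary inequality $(d^{n-1}+1)^n > (d^n+1)^{n-1}$. Dividing both sides by $(d^{n-1}+1)^{n-1}$, this is equivalent to $d^{n-1}+1 > \bigl((d^n+1)/(d^{n-1}+1)\bigr)^{n-1}$, which holds because $(d^n+1)/(d^{n-1}+1) < d$ (equivalently $d(d^{n-1}+1)=d^n+d>d^n+1$), so the right-hand side is at most $d^{n-1}<d^{n-1}+1$. Hence $\E\,|N(\vec p)|>1$, contradicting $|N(\vec p)|\leq 1$, so no player strategy wins on $d^n+1$ colors and $\HG(W_{k,n})\leq d^n$; together with the lower bound this gives equality. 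I expect the delicate point to be precisely this upper bound: the crude observation that two coverable sets of size $\leq (k-1)q^{k-2}$ cannot cover $[q]^{k-1}$ only rules out a single blade handling a single pair of central colors when $2(k-1)<q$, which already fails here for $d\geq 3$; it is the averaged quantity $\E\,|N(\vec p)|$, together with the tightness of $(d^{n-1}+1)^n > (d^n+1)^{n-1}$, that pins the threshold down to exactly $d^n$.
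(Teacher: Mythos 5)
Your proof is correct in both directions; there is no gap. The overall architecture matches the paper's --- both halves turn on the bound $(k-1)q^{k-2}$ for the size of a winning set on $K_{k-1}$ (the paper's Lemma~\ref{lem-H}) and on the inequality $(d^{n-1}+1)^n>(d^n+1)^{n-1}$ --- but your packaging differs in two places. For the upper bound, the paper invokes Lemma~\ref{lem-windmill-condition} to get $q+1$ pairwise disjoint product sets $\prod_j\overline{S_j}$ and derives a contradiction by comparing their total volume to $(q+1)^{(k-1)n}$; your computation of $\E\,|N(\vec p)|$ is exactly that volume count in expectation language (summing $|N(\vec p)|$ over all $\vec p$ is summing the volumes of the sets $\prod_j\overline{X_j(c)}$ over $c$, and $|N(\vec p)|\le 1$ is their disjointness), with the bonus that you actually verify the key inequality, which the paper only asserts. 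For the lower bound, both strategies hinge on base-$d$ digits of the blade sums, but the paper indexes its $d^n$ disjoint sets by the translates $A_i+j$ of the digit-zero sets and needs the difference-disjoint property of Lemma~\ref{lem-resid} to see that distinct translates give disjoint products (modular carries make this nontrivial); you instead have blade $j$ target the set $\{\sigma:\pi_j(\sigma)\neq\pi_j(c_v)\}$, so the $d^n$ losing events for the blades are indexed by the digit vector of $c_v$ and their disjointness is immediate. This bypasses Lemma~\ref{lem-resid} entirely and is a genuine, if modest, simplification of the construction; what it gives up is the more general difference-disjoint framework, which the paper's formulation leaves available for other parameter ranges of $W_{k,n}$.
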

In fact, it is not difficult to generalize this construction and show that $\HG(W_{k,n})\approx k+k^{1-1/n}$ in general.

In Section~\ref{sec-books}, we study book graphs and prove Theorem~\ref{thm-book} by solving the equivalent geometric problem. Then, in Section~\ref{sec-k33}, we prove Theorem~\ref{thm-k33} by reducing it to certain partitioning and covering problems in a cube. In Section~\ref{sec-windmills}, we study windmill graphs and prove Theorems~\ref{thm-wd-2k-2} and \ref{thm-wd-dn}. Finally, in Section~\ref{sec-concl}, we present a few of the many attractive open problems in this area.

\section{Books} \label{sec-books}

Recall that the {\it book graph} $B_{d, n}$ is obtained by adding $n$ nonadjacent common neighbors to the complete graph $K_d$. The $d$-clique is called the \emph{spine} of $B_{d, n}$ and the other $n$ vertices are called its \emph{pages}. The hat-guessing number of books can was reduced by~\cite{bosek2019hat} and~\cite{he2020hat} to a geometric problem.
\begin{definition}
    A set $S \subset \N^d$ is \emph{coverable} if there is a partition $S = S_1 \sqcup S_2 \sqcup \cdots \sqcup S_d$ such that $S_i$ contains at most one point along any line parallel to the $i$-th coordinate axis.
\end{definition}
For example, the set $[2]^2$ is coverable in $\mathbb{N}^2$ because it has the partition $S_1 = \{(0,0), (1,1)\}$, $S_2 = \{(0,1),(1,0)\}$ so that $S_i$ has at most one point along any axis-parallel line, but the set $[2]\times [3]$ has no such partition and is not coverable.

Let $h(\mathbb{N}^d)$ be the largest $t$ such that every $t$-subset of $\mathbb{N}^d$ is coverable. It was shown by~\cite{he2020hat} that for $n$ sufficiently large in terms of $d$, $\HG(B_{d,n})=h(\mathbb{N}^d) + 1$. In other words $\HG(B_{d,n})$ is the size of the smallest non-coverable set in $d$ dimensions. Below we compute $h(\mathbb{N}^d)$ exactly by reformulating coverability as a matching condition and applying Hall's Marriage Theorem. The corresponding neighborhood condition is as follows.
\begin{definition}
    A set $S \subset \N^d$ is \emph{numerically coverable} if $\sum_{i=1}^d |\pi_i(S)| \ge |S|$, where $\pi_i(S)$ is the $(d-1)$-dimensional projection of $S$ onto the $i$-th coordinate hyperplane.
\end{definition}
The following key lemma reduces checking coverability to checking numerical coverability.
\begin{lemma}\label{lemma-cov-iff-numcov}
     A set $S \subset \N^d$ is coverable if and only if every subset of $S$ is numerically coverable.
    \end{lemma}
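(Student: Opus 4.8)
The plan is to recognize coverability as the existence of a perfect matching in an auxiliary bipartite graph, and then read off numerical coverability of all subsets as precisely Hall's condition for that graph.

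First I would build the bipartite graph $H$. On one side put the points of $S$; on the other side put the \emph{slots}, meaning pairs $(i,v)$ with $i\in\{1,\dots,d\}$ a direction and $v\in\pi_i(S)$ a point of the $i$-th coordinate hyperplane hit by $S$. Join $p\in S$ to the slot $(i,v)$ exactly when $\pi_i(p)=v$, so that $p$ has degree exactly $d$, being adjacent to $(1,\pi_1(p)),\dots,(d,\pi_d(p))$. A matching that saturates the $S$-side is the same as a function $p\mapsto i(p)$ assigning each point a direction such that no two points with the same assigned direction $i$ share the same value of $\pi_i$; setting $S_i=\{p : i(p)=i\}$, the condition ``$\pi_i$ is injective on $S_i$'' is exactly ``$S_i$ contains at most one point on any line parallel to the $i$-th axis,'' since such a line is recorded precisely by $\pi_i$. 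Hence $S$ is coverable if and only if $H$ admits a matching saturating $S$.

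Next, for a finite $T\subseteq S$ I would compute the neighborhood $N_H(T)$. Because each slot carries its direction as a label, the direction-$i$ slots adjacent to $T$ are exactly $\{(i,v):v\in\pi_i(T)\}$, and these families are disjoint over $i$, so $|N_H(T)|=\sum_{i=1}^d|\pi_i(T)|$. Thus Hall's condition $|N_H(T)|\ge|T|$ for all finite $T$ says exactly that every subset of $S$ is numerically coverable. Since every vertex on the $S$-side of $H$ has finite degree $d$, the infinite form of Hall's Marriage Theorem applies and yields a matching saturating $S$ precisely when this holds; this gives the ``if'' direction (and for an infinite $T$ numerical coverability is automatic, as at most one $\pi_i(T)$ can be finite). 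The ``only if'' direction does not even need Hall: given a coverable partition $S=S_1\sqcup\cdots\sqcup S_d$ and any $T\subseteq S$, restriction gives $|T|=\sum_i|T\cap S_i|=\sum_i|\pi_i(T\cap S_i)|\le\sum_i|\pi_i(T)|$.

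The only delicate point is bookkeeping rather than a genuine obstacle: one must index the slots so that $N_H(T)$ genuinely splits as a disjoint union over the $d$ directions, so that $|N_H(T)|=\sum_i|\pi_i(T)|$ holds with no overcounting — it is the direction label on the slot, not just the hyperplane point $v$, that makes this work. Everything else is a direct translation between the language of partitions and the language of matchings.
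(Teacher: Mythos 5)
Your proof is correct and takes essentially the same route as the paper: your ``slots'' $(i,v)$ are exactly the axis-parallel lines meeting $S$, and both arguments reduce coverability to a matching saturating $S$ and identify Hall's condition with numerical coverability of all subsets. Your extra care about the infinite case (finite degrees on the $S$-side, infinite subsets being automatically numerically coverable) is a welcome refinement the paper glosses over, but it is not a different approach.
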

    \begin{proof}
    Suppose first that a set $S$ is coverable by the partition $S_1 \sqcup S_2 \sqcup \cdots \sqcup S_d$. For any subset $T \subseteq S$, let $T_i= S_i \cap T$, so that $T = T_1 \sqcup T_2 \sqcup \cdots \sqcup T_d$ is a partition of $T$ which certifies the coverability of $T$. By the definition of coverability, we get $|\pi_i(T)| \geq |T_i|$ for all $i$, so $\sum_{i=1}^d |\pi_i(T)| \ge \sum_{i=1}^d |T_i| = |T|$. Thus every subset of $S$ is numerically coverable.
    
    Now suppose every subset of $S$ is numerically coverable. We use the asymmetric version of Hall's Marriage Theorem~\cite{hall}, which states that a bipartite graph $G$ on sets $U$ and $V$ contains a perfect matching from $U$ to $V$ if every subset $U'$ of $U$ has at least $|U'|$ total neighbors in $V$. 
    
    We define a bipartite graph $G$ to apply Hall's Theorem to as follows. The left side is our set $S$, and the right side is the set $L$ of axis-parallel lines intersecting $S$. An edge $(s,\ell)\in S\times L$ lies in $G$ if and only if line $\ell$ contains point $s$. If every subset of $S$ is numerically coverable, this means that every set $T\subseteq S$ of points lies on at least $|T|$ distinct axis-parallel lines in $L$, and so the conditions of Hall's Marriage Theorem are satisfied. Thus, a perfect matching from $S$ to $L$ exists.
    
    Given a perfect matching from $S$ to $L$, we can construct a partition for $S$ exhibiting its coverability. Indeed, let $S_i$ be the subset of $S$ matched to lines in $L$ parallel to the $x_i$-axis. This is a partition of $S$ with the property that $S_i$ contains at most one point along each $x_i$-axis, and so $S$ is coverable, as desired.
    \end{proof}
    
    Our goal in the next two lemmas is to show that all small enough sets are numerically coverable. This can be done entirely by applying results of Lev and Rudnev~\cite{lev} determining the sets minimizing $\sum_{i=1}^d |\pi_i(S)|$ for any given fixed size $|S|$. However, for simplicity of exposition we break it into two parts.

    \begin{lemma}\label{lemma-dTod-coverable}
    Any set $S\subset \N^d$ of size at most $d^d$ is numerically coverable.
    \end{lemma}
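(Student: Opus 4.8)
The plan is to derive numerical coverability directly from the discrete Loomis--Whitney inequality together with the AM--GM inequality. Write $p_i = |\pi_i(S)|$ for $1 \le i \le d$ and $m = |S|$; the goal is to show $p_1 + \cdots + p_d \ge m$. The cases $S = \emptyset$ and $d = 1$ are immediate (for $d = 1$ we have $m \le 1$ and $|\pi_1(S)| = 1$ whenever $S \ne \emptyset$), so I would assume $d \ge 2$ and $S \ne \emptyset$, whence every $p_i \ge 1$.

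The first step is to apply the discrete Loomis--Whitney inequality, which states that any finite $S \subset \mathbb{N}^d$ satisfies $|S|^{d-1} \le \prod_{i=1}^d |\pi_i(S)|$, i.e.\ $m^{d-1} \le \prod_{i=1}^d p_i$. Combining this with AM--GM then gives
\[
\sum_{i=1}^d p_i \;\ge\; d \Bigl( \prod_{i=1}^d p_i \Bigr)^{1/d} \;\ge\; d\, m^{(d-1)/d} \;=\; \frac{d\,m}{m^{1/d}} .
\]
Since by hypothesis $m \le d^d$, we have $m^{1/d} \le d$, so the right-hand side is at least $\frac{d\,m}{d} = m$, and therefore $\sum_{i=1}^d p_i \ge m = |S|$, which is exactly the statement that $S$ is numerically coverable.

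The only non-trivial ingredient is the discrete Loomis--Whitney inequality itself. I would either cite it as classical (it follows from the original Loomis--Whitney inequality, or from Shearer's entropy lemma) or include the short inductive proof on $d$: slicing $S$ by the value of its last coordinate and applying the $(d-1)$-dimensional case to each slice, the cross terms are controlled by the Cauchy--Schwarz inequality. There is no serious obstacle in the argument; the only points requiring attention are that the inequality must be applied with $\pi_i$ in the sense of forgetting the $i$-th coordinate (exactly as defined above) and the degenerate cases $d=1$ and $S=\emptyset$, both handled trivially. Note finally that $d^d$ is the natural threshold for this approach: for $S = [d]^d$ one has $|\pi_i(S)| = d^{d-1}$ for all $i$, so $\sum_{i=1}^d |\pi_i(S)| = d^d = |S|$ with equality throughout the chain above, which is why a separate, finer argument (the next lemma) is needed to push past size $d^d$.
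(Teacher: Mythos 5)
Your proof is correct and follows essentially the same route as the paper's: the discrete Loomis--Whitney inequality combined with AM--GM yields $\sum_i |\pi_i(S)| \ge d|S|^{(d-1)/d} \ge |S|$ once $|S| \le d^d$. The extra attention to the degenerate cases and the tightness remark at $S = [d]^d$ are fine but do not change the argument.
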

    \begin{proof}
    We use a special case of the Loomis-Whitney inequality~\cite{loomis}:
    \[
    |S|^{d-1} \le \prod_{i=1}^d |\pi_i(S)|.
    \]
    Applying the arithmetic mean\textendash geometric mean inequality to the inequality above implies
    \begin{align*}
    |S|^{d-1} &\le \left(\frac{1}{d} \sum_{i=1}^d |\pi_i(S)|\right)^d \\
    d|S|^{\frac{d-1}{d}} &\le \sum_{i=1}^d |\pi_i(S)| \label{eq: 2}\tag{2}.
    \end{align*}
    Since $|S| \le d^d$, we have $|S| \le d|S|^{\frac{d-1}{d}}$, and so~(\ref{eq: 2}) gives $|S| \le \sum_{i=1}^d |\pi_i(S)|$ as desired. 
    \end{proof}
    It remains to show that if $|S|$ is between $d^d$ and $\sum_{i=1}^d i^i$, $S$ is still numerically coverable.
    \begin{lemma}\label{lemma-sum-numcov}
    Any set $S\subset \N^d$ of size at most $\sum_{i = 1}^d i^i$ is numerically coverable.
    \end{lemma}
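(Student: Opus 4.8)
The plan is to induct on $d$. The base case $d = 1$ is immediate, since $|\pi_1(S)| = \min(|S|,1) \ge |S|$ for every $S \subset \N^1$ with $|S| \le 1 = \sum_{i=1}^1 i^i$. For the inductive step, fix $S \subset \N^d$ with $|S| \le \sum_{i=1}^d i^i$ and slice $S$ along the last coordinate, writing $S_t = \{x \in \N^{d-1} : (x,t) \in S\}$ for the $t$-th slice. Since for $i < d$ the projection $\pi_i$ forgets the $i$-th coordinate but remembers the $d$-th, the slices contribute to it independently: $\pi_d(S) = \bigcup_t S_t$ and $|\pi_i(S)| = \sum_t |\pi_i(S_t)|$ for $i < d$ (abusing notation, $\pi_i$ on the right denotes the analogous projection in $\N^{d-1}$). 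Hence
\[
\sum_{i=1}^d |\pi_i(S)| \;=\; \Bigl|\bigcup_t S_t\Bigr| + \sum_t \Bigl(\sum_{i=1}^{d-1}|\pi_i(S_t)|\Bigr) \;\ge\; \max_t |S_t| + \sum_t \Bigl(\sum_{i=1}^{d-1}|\pi_i(S_t)|\Bigr),
\]
so it remains to bound $\sum_{i<d}|\pi_i(S_t)|$ from below for each slice.

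Call a slice \emph{small} if $|S_t| \le \sum_{i=1}^{d-1} i^i$ and \emph{large} otherwise. For a small slice the inductive hypothesis gives $\sum_{i<d}|\pi_i(S_t)| \ge |S_t|$; in particular, if every slice is small then already $\sum_i|\pi_i(S)| \ge \sum_t |S_t| = |S|$ and we are done (and Lemma~\ref{lemma-dTod-coverable} even covers the sub-case where every slice has size at most $(d-1)^{d-1}$). For a large slice the inductive hypothesis is silent, so instead I would invoke the theorem of Lev and Rudnev~\cite{lev}, which determines, for every $N$, the exact value $m_{d-1}(N) := \min\{\sum_{i=1}^{d-1}|\pi_i(T)| : T \subset \N^{d-1},\ |T|=N\}$ and its minimizers (the same slicing also produces a recursion for these minima, which could be used in place of~\cite{lev}). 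Writing $n_t = |S_t|$, $n_1 = \max_t n_t$, and $B = \{t : n_t > \sum_{i=1}^{d-1} i^i\}$, the display above together with the two slice bounds reduces the lemma to the purely arithmetic claim
\[
n_1 \;\ge\; \sum_{t\in B}\bigl(n_t - m_{d-1}(n_t)\bigr)
\]
whenever $n_1 \ge n_2 \ge \cdots \ge n_m \ge 1$ and $\sum_t n_t \le \sum_{i=1}^d i^i$.

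The step I expect to be the main obstacle is this arithmetic claim. Only parts exceeding $(d-1)^{d-1}$ contribute positively to the right-hand side, and the budget $\sum_t n_t \le d^d + \sum_{i=1}^{d-1} i^i$ bounds both how many such parts there can be and how large they can be relative to $n_1$; feeding in the exact values of $m_{d-1}$ from~\cite{lev} should then close the estimate, after a somewhat delicate case analysis organized around how the ``deficiency'' $n \mapsto n - m_{d-1}(n)$ grows on the relevant interval. It is essential here to use the sharp minimum $m_{d-1}$: the weaker bound $m_{d-1}(N) \ge (d-1)N^{(d-2)/(d-1)}$ from Loomis--Whitney and AM--GM (as in Lemma~\ref{lemma-dTod-coverable}) falls below $N$ precisely once $N > (d-1)^{d-1}$, and it ignores exactly the integrality effects that pin the threshold at $\sum_{i=1}^d i^i$ rather than at something larger. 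This threshold is sharp --- matching the upper bound~\eqref{eq:gadouleau} used in Theorem~\ref{thm-book} --- as is witnessed by a suitable ``staircase'' set of size $1+\sum_{i=1}^d i^i$ whose projection-sum is only $\sum_{i=1}^d i^i$ (for $d=2$, the $2\times3$ box).
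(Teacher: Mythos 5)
Your slicing identities and the reduction to the inequality $n_1 \ge \sum_{t\in B}\bigl(n_t - m_{d-1}(n_t)\bigr)$ are correct, and your overall route (slice along one coordinate, apply the inductive hypothesis to small slices, and control large slices via the exact projection minima) is genuinely different from the paper's. But the proof is incomplete precisely at the step you yourself flag as the main obstacle: the arithmetic claim is asserted to follow from ``a somewhat delicate case analysis'' and is never carried out. This is not a minor omission. Already for $d=3$, where $m_2(N)=\lceil 2\sqrt N\rceil$, the inequality holds with \emph{equality} for many admissible profiles summing to $32$ --- for instance $(16,16)$, $(12,12,8)$, $(11,11,10)$, $(9,9,9,5)$, and $(8,8,8,8)$ all give $\sum_{t\in B}(n_t-m_2(n_t)) = n_1$ exactly --- so there is no slack anywhere, and any proof must be sharp across an entire family of extremal configurations. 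For $d\ge 4$ the situation is worse: $m_{d-1}(N)$ has no closed form comparable to $\lceil 2\sqrt N\rceil$ (the Lev--Rudnev minimizers are initial segments of a particular order on $\N^{d-1}$, and the value function must be extracted from that description), so ``feeding in the exact values of $m_{d-1}$'' is itself a nontrivial piece of work. As written, the core of the lemma is unproved.

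For comparison, the paper avoids this arithmetic entirely by using Lev--Rudnev more strongly: not just the \emph{value} of the minimum of $\sum_i|\pi_i(S)|$ at each size, but the \emph{structure} of the minimizers. Taking a minimal counterexample (minimal $d$, then minimal $|S|$, then minimal projection sum), one may assume $[d]^d\subseteq S\subseteq [d]^{d-1}\times[d+1]$. Since $\sum_i|\pi_i([d]^d)| = d\cdot d^{d-1} = |[d]^d|$, subtracting the cube shows that $\pi_d(S\setminus[d]^d)$ is a smaller-dimensional counterexample of size at most $\sum_{i=1}^{d-1} i^i$, contradicting minimality. That is, the structural form of the extremal sets lets one peel off the cube and descend one dimension in a single step, with no optimization over slice-size profiles. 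If you want to salvage your approach, you would need either to prove the arithmetic claim in full (which likely requires reproving a structural statement about where $n\mapsto n-m_{d-1}(n)$ is extremal, i.e., much of what Lev--Rudnev already gives), or to import their structure theorem as the paper does.
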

    \begin{proof}
    We already know the claim holds for $|S| \leq d^d$ by Lemma~\ref{lemma-dTod-coverable}. Thus, we may assume $S$ satisfies $d^d < |S| \leq \sum_{i = 1}^d i^i$.
    
    Assume for the sake of contradiction that the lemma does not hold for all $d \in \mathbb{N}$. Then, there must be some minimum $d$ for which it does not hold. Pick this smallest $d$ for which the lemma is false and let $S$ be a minimum counterexample in this dimension $d$; that is, $d^d < |S| \leq \sum_{i = 1}^d i^i$ and $|S| > \sum_{i=1}^d |\pi_i(S)|$. We may further assume that $S$ minimizes $\sum_{i=1}^d |\pi_i(S)|$ among all sets of the same size $|S|$. 
    
    Lev and Rudnev~\cite{lev} determined the exact sets $S$ of any fixed size minimizing $\sum_{i=1}^d |\pi_i(S)|$. It is a straightforward deduction from their results that we may assume that an optimal $S$ contains the hypercube $[d]^d$ and that $S \setminus [d]^d$ lies in one hyperface adjacent to the hypercube. Without loss of generality, say the hyperface in question is the one with $x_d = d+1$. In other words, we assume
    \[
    [d]^d \subseteq S \subseteq [d]^{d-1}\times [d+1]. 
    \]
    This implies that $\pi_i(S) = \pi_i\left([d]^d \right) + \pi_i\left(S \setminus [d]^d\right)$ for $i= 1,\ldots, d-1$ and that $\pi_d(S) = \pi_d\left([d]^d \right)$.
    
    Then,
    \begin{align*}
        \lvert S \rvert &> \sum_{i=1}^d \lvert \pi_i(S) \rvert\\
        \lvert S \rvert - \lvert [d]^d \rvert = \lvert S \setminus [d]^d \rvert &>  \sum_{i=1}^d \lvert \pi_i(S) \rvert -  \sum_{i=1}^d \lvert \pi_i\left([d]^d \right) \rvert\\
        \lvert S \setminus [d]^d \rvert &> \sum_{i=1}^{d-1} \pi_i\left(S \setminus [d]^d\right) 
    \end{align*}
    Since $\lvert S \rvert \leq \sum_{i = 1}^d i^i$, we see $\lvert S \setminus [d]^d \rvert \leq \sum_{i = 1}^{d-1} i^i$. Thus $\pi_d(S \setminus [d]^d)$ is a counterexample to the claim in dimension $d-1$, contradicting the minimality of $d$. 
    \end{proof}
Now Theorem~\ref{thm-book} follows immediately, since Lemmas~\ref{lemma-cov-iff-numcov} and \ref{lemma-sum-numcov} together prove that $h(\N^d) \geq \sum_{i = 1}^d i^i$, and the matching upper bound was shown by Gadouleau~\cite{gadouleau}. For completeness, we include a quick sketch of this upper bound construction.

\begin{lemma} \label{lem-bk-upper}
    For $d\ge 1$, we have $h(\N^d) \leq \sum_{i = 1}^d i^i.$
\end{lemma}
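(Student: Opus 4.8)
The goal is to exhibit, for each $d$, a set $S \subset \N^d$ of size $1 + \sum_{i=1}^d i^i$ that is not coverable; by Lemma~\ref{lemma-cov-iff-numcov} it suffices to find such an $S$ possessing a subset $T$ that fails to be numerically coverable, i.e.\ with $\sum_{i=1}^d |\pi_i(T)| < |T|$. The natural candidate, matching the extremal configurations of Lev and Rudnev used in Lemma~\ref{lemma-sum-numcov}, is a ``staircase of nested cubes'': take $S$ to be the disjoint union of axis-parallel hypercubes $[1]^d \subset [2]^d \subset \cdots$, each shifted off the previous one along a fresh coordinate direction so that the $i$-th piece is a translate of $[i]^d$ for $i=1,\dots,d$, plus one extra point. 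Concretely, one builds $S$ recursively: place $[d]^d$, then attach a copy of the dimension-$(d-1)$ extremal set in the hyperface $x_d = d$, and so on down the dimensions, finishing with a single isolated point. This yields $|S| = \sum_{i=1}^d i^i + 1$.

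The key computation is to check that this $S$ itself (taking $T = S$) violates the numerical coverability inequality. For the plain cube $[m]^d$ one has $|\pi_i([m]^d)| = m^{d-1}$, so $\sum_i |\pi_i([m]^d)| = d\,m^{d-1}$, which is smaller than $m^d$ precisely when $m > d$; the cube $[d+1]^d$ is already non-coverable, but it is too large. The point of the staircase construction is that stacking the cubes $[1]^d, [2]^d, \ldots, [d]^d$ along distinct directions makes the projections overlap maximally in $d-1$ of the $d$ directions while being disjoint in only one, so the total projection count grows like $\sum_{i=1}^d i^{d-1}$ (roughly), and one checks by a short induction on $d$ that $\sum_{i=1}^d i^i$ strictly exceeds the projection sum $\sum_{i=1}^d |\pi_i(S)|$ once the final $+1$ point is added. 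The induction mirrors the peeling argument in Lemma~\ref{lemma-sum-numcov}: removing the top cube $[d]^d$ from $S$ leaves (a translate of) the dimension-$(d-1)$ extremal set, for which the inequality is known by the inductive hypothesis, and one verifies that reinstating $[d]^d$ contributes exactly $d^d$ to $|S|$ but only $d\cdot d^{d-1} = d^d$ to the projection sum in the $d-1$ ``old'' directions plus $d^{d-1}$ in the new direction $x_d$, so the deficit is preserved.

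I expect the main obstacle to be purely bookkeeping: choosing the translates of the nested cubes so that (a) the cubes are genuinely disjoint as subsets of $\N^d$, (b) each projection $\pi_i(S)$ is exactly the union claimed — i.e.\ the shift directions are arranged so that in direction $i$ the cubes $[i]^d,[i+1]^d,\ldots,[d]^d$ all project onto the \emph{same} $(d-1)$-dimensional box while $[1]^d,\ldots,[i-1]^d$ project onto fresh disjoint boxes — and (c) the extra point does not accidentally lie on any already-occupied line, so it contributes $1$ to $|S|$ but $0$ to every $\pi_i(S)$. Once the coordinates are pinned down, verifying $\sum_{i=1}^d|\pi_i(S)| = |S| - 1$ is a finite sum identity. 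Since the lemma is only needed to confirm tightness and the matching upper bound was already established in~\cite{gadouleau}, I would keep this to a brief sketch, cite Lev--Rudnev~\cite{lev} for the fact that the staircase is extremal, and relegate the explicit coordinate choices to a displayed recursive definition.
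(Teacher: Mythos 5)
Your construction is the same recursive staircase the paper (following Gadouleau) sketches: place the cube $[d]^d$, attach a copy of the $(d-1)$-dimensional non-coverable set in the hyperface $x_d=d$, and bottom out with two points on a line. Your plan to certify non-coverability by showing that $T=S$ itself violates numerical coverability, with the deficit $\sum_i |\pi_i(S)| = |S|-1$ preserved at each level of the recursion, is sound and actually supplies the verification that the paper's sketch omits. The induction does close: writing $S_d = [d]^d \cup S_{d-1}'$ with $S_{d-1}'$ a copy of $S_{d-1}\subseteq [d]^{d-1}$ placed in the hyperplane $x_d=d$, one gets $|\pi_d(S_d)| = d^{d-1}$ (the cube absorbs the footprint of $S_{d-1}'$) and $|\pi_i(S_d)| = d^{d-1} + |\pi_i(S_{d-1})|$ for $i<d$, so $\sum_i|\pi_i(S_d)| = d^d + (|S_{d-1}|-1) = |S_d|-1$.

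Two details are wrong as written, though. First, your opening description makes the $i$-th piece a translate of the full-dimensional cube $[i]^d$; that set has $1+\sum_{i=1}^d i^d$ points, which exceeds $1+\sum_{i=1}^d i^i$ for $d\ge 3$ and contradicts the size you then claim. The pieces must be the $i$-dimensional cubes $[i]^i$ sitting in nested coordinate hyperplanes, exactly as in your ``concretely'' sentence --- keep the recursive description and discard the $[i]^d$ one. Second, your condition (c) is backwards: a point contributing $1$ to $|S|$ but $0$ to every $\pi_i(S)$ must lie on an \emph{already-occupied} axis-parallel line in every direction, not avoid them; if it avoided them it would add $1$ to each of the $d$ projections and destroy the deficit. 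In fact there is no separate ``extra point'' to place: the $+1$ is just the second point of the innermost one-dimensional piece, which shares its unique line with the first. With those corrections your argument matches the paper's.
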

\begin{figure}[h!]
    \centering
    \includegraphics[scale=0.75]{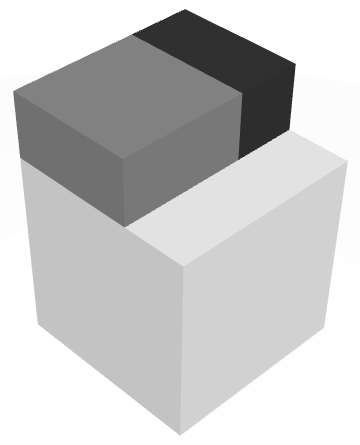}
    \caption{A non-coverable 33-set in $\N^3$}
    \label{fig:uncoverable}
\end{figure}
\begin{proof}[Proof Sketch.]
We construct a non-coverable set of size $1+ \sum_{i=1}^d i^i$ by induction. First, it is evident that $h(\N^1)=1$. Next, given a set $S \subset \N^d$ of size $1+ \sum_{i=1}^d i^i$ that is non-coverable, we can create a non-coverable set of size $1+ \sum_{i=1}^{d+1} i^i$ in $\N^{d+1}$ as follows. Simply take a $[d+1]^{d+1}$ hypercube and position a copy of $S$ inside a coordinate hyperplane adjacent to one of its faces. In Figure~\ref{fig:uncoverable}, which shows the case $d = 3$, the white region is the $3 \times 3 \times 3$ cube, and the gray and black regions are $S$.
\end{proof}

\section{The Complete Bipartite Graph $K_{3,3}$} \label{sec-k33}
The hat-guessing number of complete bipartite graphs relates closely to packing combinatorial cubes, as defined below.
\begin{definition}
    In three dimensions, an $l \times m \times n$ \emph{combinatorial prism} is a Cartesian product of one $l$-set, one $m$-set, and one $n$-set. If $l = m = n$, it is called a \emph{combinatorial cube}. ``Combinatorial prisms'' and ``combinatorial cubes'' will be abbreviated as ``prisms'' and ``cubes'' respectively.
\end{definition}
The following lemma explicitly states the relation between cubes and hat-guessing on complete bipartite graphs. It is a specific case of machinery for complete bipartite graphs presented in~\cite{alon2018hat}.
\begin{lemma}\label{lemma-cube-condition}
    We have $\HG\left(K_{3,3}\right) \geq 4$ if and only if there exist three partitions $P$, $Q$, and $R$ given by \[[4]^3 = P_1 \sqcup P_2 \sqcup P_3 \sqcup P_4 = Q_1 \sqcup Q_2 \sqcup Q_3 \sqcup Q_4 = R_1 \sqcup R_2 \sqcup R_3 \sqcup R_4\] such that $P_i \cup Q_j \cup R_k$ contains a $3 \times 3 \times 3$ cube for all choices of $1\le i, j, k \le 4$.
\end{lemma}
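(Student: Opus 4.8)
The plan is to unwind the definition of the hat-guessing game on $K_{3,3}$ and to recognize the ``escape region'' of each side as a combinatorial cube. Write the two sides of $K_{3,3}$ as $A=\{a_1,a_2,a_3\}$ and $B=\{b_1,b_2,b_3\}$, and identify the set of hat colors with $[4]$. A strategy assigns to each $a_i$ a guessing function of the colors it sees on $B$, i.e.\ a map $[4]^3\to[4]$, and likewise a map $[4]^3\to[4]$ to each $b_j$. An adversarial hat assignment is a pair $(\mathbf{x},\mathbf{y})\in[4]^3\times[4]^3$, where $\mathbf{x}$ lists the colors on $A$ and $\mathbf{y}$ lists the colors on $B$; vertex $a_i$ is correct iff its guess equals $x_i$ and $b_j$ is correct iff its guess equals $y_j$, and the players lose on $(\mathbf{x},\mathbf{y})$ exactly when all six guesses are wrong. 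Since $\HG$ is by definition the largest number of colors on which the players win, $\HG(K_{3,3})\ge4$ holds iff the players have a strategy winning on all of $[4]^6$.

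Next I would fix the colors $\mathbf{x}$ on $A$ and ask which colorings $\mathbf{y}$ of $B$ fool all of $B$. If $b_j$ guesses $v_j(\mathbf{x})\in[4]$ upon seeing $\mathbf{x}$, then $B$ is entirely wrong precisely when $y_j\neq v_j(\mathbf{x})$ for every $j$, i.e.\ when $\mathbf{y}$ lies in $\prod_{j=1}^{3}\bigl([4]\setminus\{v_j(\mathbf{x})\}\bigr)$, which is a $3\times3\times3$ combinatorial cube inside $[4]^3$. Conversely, since a $3$-subset of $[4]$ has a unique complementary element, every $3\times3\times3$ cube in $[4]^3$ arises this way for a suitable choice of $v_1(\mathbf{x}),v_2(\mathbf{x}),v_3(\mathbf{x})$. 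Thus a strategy for $B$ amounts to choosing, for each $\mathbf{x}\in[4]^3$, an arbitrary $3\times3\times3$ cube $U_B(\mathbf{x})\subseteq[4]^3$: the set of colorings of $B$ that beat all of $B$ when $A$ is colored $\mathbf{x}$.

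Then I would translate ``some vertex of $A$ is correct'' into a covering condition. Fixing a strategy for $A$, let $P$, $Q$, $R$ be the partitions of $[4]^3$ into the fibers of the guessing functions of $a_1$, $a_2$, $a_3$, with parts indexed by the guessed color; then $a_i$ is correct on $(\mathbf{x},\mathbf{y})$ iff $\mathbf{y}$ lies in the part of $P$ (respectively $Q$, $R$) indexed by $x_i$, so \emph{some} vertex of $A$ is correct iff $\mathbf{y}\in P_{x_1}\cup Q_{x_2}\cup R_{x_3}$. Combining with the previous paragraph, the players win on all of $[4]^6$ iff there exist partitions $P,Q,R$ of $[4]^3$ and cubes $U_B(\mathbf{x})$ with $U_B(\mathbf{x})\subseteq P_{x_1}\cup Q_{x_2}\cup R_{x_3}$ for every $\mathbf{x}$. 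Since the $U_B(\mathbf{x})$ may be chosen independently over all $3\times3\times3$ cubes, such a choice exists for a given $P,Q,R$ iff $P_{x_1}\cup Q_{x_2}\cup R_{x_3}$ contains a $3\times3\times3$ cube for every $\mathbf{x}=(x_1,x_2,x_3)$; relabeling $(x_1,x_2,x_3)$ as $(i,j,k)$ ranging over $1\le i,j,k\le4$ yields exactly the stated condition.

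I do not expect a serious obstacle, as the argument is mostly a careful bookkeeping of quantifiers; the step requiring the most care is the legitimacy of letting $A$ ``move first'' with an arbitrary strategy while $B$ responds optimally. This is valid because a winning joint strategy is by definition a pair of strategies, and for a fixed $A$-strategy the pair wins for some $B$-strategy iff it wins for the one that always picks a maximal escape cube inside $P_{x_1}\cup Q_{x_2}\cup R_{x_3}$. I should also record the harmless convention that the guessing functions of $A$ induce partitions of $[4]^3$ into $4$ possibly empty parts, matching the form $[4]^3=P_1\sqcup P_2\sqcup P_3\sqcup P_4$ in the statement.
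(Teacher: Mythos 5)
Your proof is correct and follows essentially the same approach as the paper: partitions arise as fibers of one side's guessing functions, and the set of colorings fooling the other side is a $3\times3\times3$ cube (equivalently, the complement of a Hamming ball). Your quantifier bookkeeping in fact treats the ``only if'' direction more explicitly than the paper, which only sketches it, but the underlying argument is the same.
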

\begin{proof}
We show the proof of the if direction. In $K_{3,3}$, call the left and right parts $V_L$ and $V_R = \{p, q, r\}$ respectively. Define $v_L\in [4]^3$ to be the vector of hat color assignments on $V_L$. Then, the guessing strategies of vertices $p$, $q$, and $r$ will be built from the partitions $P$, $Q$, and $R$, respectively. Specifically, let $p$ guess color $i-1$ exactly when $v_L \in P_i$. Similarly define the hat-guessing strategies on $q$ and $r$ in terms of the partitions $Q$ and $R$ respectively.

It remains to give the guessing strategy on the left hand side. Since we only need one vertex total to guess correctly, the vertices in $V_L$ may assume that each of $p$, $q$, and $r$ guesses incorrectly. If the vertices of $V_L$ see colors $i$, $j$, and $k$ on vertices $p$, $q$, and $r$ respectively, this assumption implies that $v_L \not \in P_i \cup Q_j \cup R_k$. Recalling that every such union contains a $3\times 3\times 3$ cube, we see that $v_L$ must be in the complement of such a cube. But the complement of a $3\times 3\times 3$ cube $C$ in $[4]^3$ is the Hamming ball of radius $2$ about some point $(x,y,z)\in [4]^3$, i.e. every point outside $C$ shares a coordinate with $(x,y,z)$. Thus, if the three vertices on the left guess colors $x$, $y$, and $z$, respectively, at least one of them guesses correctly, as desired.

The only if direction is similar. Given a winning guessing strategy, the partitions $P$, $Q$, and $R$ are exactly those given by the fibers of the guessing functions of the right hand side vertices $p$, $q$, and $r$.
\end{proof}
It will be convenient to study two-fold, and not mutual, intersections of set families.
\begin{definition} \label{def-m-inter}
    A point $p$ is a \emph{two-intersection point} of a family of sets $\{S_1, \dots, S_n\}$ ($n\ge 2$) if it is contained in at least two distinct sets $S_i$ and $S_j$. The set of all two-intersection points of $\{S_1, \dots, S_n\}$ is simply called the \emph{two-intersection} of this family.
\end{definition}
Before we complete the proof of Theorem~\ref{thm-k33}, we will need three technical lemmas about the intersection patterns of cubes and prisms in $[4]^3$.
\begin{lemma} \label{lemma-cube-or-minusPoint}
    If four $3 \times 3 \times 3$ cubes in $[4]^3$ two-intersect in at most 29 points, then their two-intersection is either a $3 \times 3 \times 3$ cube or a $3 \times 3 \times 3$ cube missing one point.
\end{lemma}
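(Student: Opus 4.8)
The plan is to coordinatize. A $3\times 3\times 3$ cube $C\subseteq[4]^3$ is exactly the set of points disagreeing in \emph{every} coordinate with some fixed point $p\in[4]^3$; equivalently $C$ is the complement of the radius-$2$ Hamming ball $B(p,2)$. So write the four cubes as $C_1,\dots,C_4$ with ``antipodes'' $p_1,\dots,p_4$. Then $x\in C_j$ iff $x$ agrees with $p_j$ in no coordinate, so $x$ lies in the two-intersection $I$ iff $x$ agrees (in at least one coordinate) with at most two of $p_1,\dots,p_4$; dually $I^c=\{x: x\in B(p_j,2)\text{ for at least three }j\}$, and the hypothesis $|I|\le 29$ becomes $|I^c|\ge 35$.

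First I would dispose of the case that two antipodes coincide, say $p_1=p_2$. Then $C_1=C_2\subseteq I$, and comparing coverage multiplicities shows $I=C_1\cup(C_3\cap C_4)$. Here $|C_3\cap C_4|=\prod_{i=1}^3 u_i$ and $|C_1\cap C_3\cap C_4|=\prod_{i=1}^3 u_i'$ are products of three integers, each in $\{2,3\}$ (coordinate $i$ contributes a factor $4-|\{(p_3)_i,(p_4)_i,\dots\}|\ge 2$), with $u_i'\le u_i$; hence $|I|-27=\prod u_i-\prod u_i'$ is either $0$ or at least $2\cdot 2=4$. Since $|I|\le 29$ it is $0$, so $C_3\cap C_4\subseteq C_1$ and $I=C_1$ is a cube. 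The sub-case of three or four coinciding antipodes reduces to this.

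The real work is when $p_1,\dots,p_4$ are distinct. For each coordinate $i$ record the partition $\Pi_i$ of $\{1,2,3,4\}$ grouping indices by their $i$-th coordinate value; distinctness of the $p_j$ says the common refinement of $\Pi_1,\Pi_2,\Pi_3$ is the partition into singletons. I would then run through the possibilities for the types of $\Pi_1,\Pi_2,\Pi_3$, up to permuting coordinates, relabeling the cubes, and permuting values within each coordinate. Two tools drive this. (a) The double-counting identity $\sum_{s<t}|C_s\cap C_t|=f_2+3f_3+6f_4$, where $f_m$ is the number of points in exactly $m$ of the cubes; combined with $|I|=f_2+f_3+f_4\le 29$ it forces $|C_1\cap C_2\cap C_3\cap C_4|=f_4=\prod_i(4-k_i)\ge \tfrac13\!\big(\sum_{s<t}|C_s\cap C_t|-87\big)$ where $k_i=|\{(p_j)_i\}_j|$, which contradicts feasibility for every ``dense'' configuration (e.g.\ all pairs agreeing in two coordinates is impossible, since it would force some $k_i=4$ and hence $f_4=0$). (b) For the remaining ``sparse'' configurations, a direct evaluation of $|I|$ as a union of the six axis-aligned boxes $C_s\cap C_t$ (for instance, when every coordinate has four distinct values one gets $|I|=40$, excluded outright). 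The configurations that survive with $|I|\le 29$ are, up to symmetry, the one where a single antipode, say $p_1$, agrees with each of the other three in exactly two coordinates and $p_2,p_3,p_4$ pairwise disagree in three distinct coordinates---the ``$p_1$ together with $p_1+e_1,p_1+e_2,p_1+e_3$'' picture---for which one computes $I=C_1\setminus\{q\}$, where $q$ is the unique point of $C_1$ lying in none of $C_2,C_3,C_4$: a cube minus a point.

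The main obstacle is clearly this distinct-antipode casework: even after cutting down by the symmetry group and eliminating dense configurations with the $f_4$-inequality, several partition-type triples remain, and for the borderline ones one must compute exactly the cardinality of a union of six sub-boxes of $[4]^3$, which is delicate. A secondary point is verifying that no two-intersection has size strictly between $26$ and $30$, so that the dichotomy ``cube or cube minus a point'' is exhaustive under the hypothesis $|I|\le 29$.
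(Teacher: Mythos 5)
Your setup is the same as the paper's: write each cube as the complement of a ``big Hamming ball'' $B(p_j,2)$ and reduce the lemma to a finite case analysis on the configuration of the four antipodes $p_1,\dots,p_4$ up to the symmetry group of $[4]^3$. Your treatment of coinciding antipodes is actually \emph{more} rigorous than the paper's one-sentence dismissal: the identity $I=C_1\cup(C_3\cap C_4)$ and the observation that $\prod u_i-\prod u_i'$ is either $0$ or at least $\prod_{i\ne j}u_i\ge 4$ is a clean, checkable argument, and your two tools for the distinct case are correct (the coefficient identity $\sum_{s<t}|C_s\cap C_t|=f_2+3f_3+6f_4$ gives $f_4\ge\tfrac13(\sum_{s<t}|C_s\cap C_t|-87)$, and your sample evaluations, e.g.\ $|I|=40$ when all $k_i=4$, are right).

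The gap is that the distinct-antipode casework --- which \emph{is} the lemma --- is described rather than performed. You do not enumerate the partition-type triples $(\Pi_1,\Pi_2,\Pi_3)$ that survive reduction by symmetry, you do not show which of them are killed by the $f_4$-inequality versus which require the exact union-of-six-boxes computation, and for the borderline configurations you assert rather than verify that $|I|\ge 30$ in every case except the ``$p_1,p_1+e_1,p_1+e_2,p_1+e_3$'' configuration, where you assert $I=C_1\setminus\{q\}$ without computing it. You yourself flag this as the ``main obstacle'' and note separately that one must rule out two-intersections of sizes $28$ and $29$ (and non-cube sets of size $26$ or $27$); that is precisely the content that has to be exhibited, not deferred. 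For comparison, the paper's proof splits instead on Hamming distances (the case where some pair is at distance $1$, with four subcases for $p_3$ and explicit checks over $p_4$; then the case where all pairs are at distance at least $2$, handled by slicing into $z$-cross-sections and using lower bounds of $4$, $8$, $12$ for the two-intersections of two, three, or four distinct $3\times3$ squares in $[4]^2$, which yields $|I|\ge 32$ in every arrangement). Your organization by coordinate partition types plus the global double count is a legitimate and arguably tidier alternative skeleton, but until the surviving types are listed and each is either eliminated or computed exactly, the proposal is a plan for a proof rather than a proof.
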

\begin{lemma} \label{lemma-three-cubes}
Three $3 \times 3 \times 3$ cubes in $[4]^3$ must two-intersect in at least 20 points. 
\end{lemma}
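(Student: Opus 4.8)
The plan is to reduce the statement to a small finite optimization by analyzing the overlap of the three cubes one coordinate at a time. Write each $3\times 3\times 3$ cube as $X_i = ([4]\setminus\{\alpha_i\})\times([4]\setminus\{\beta_i\})\times([4]\setminus\{\gamma_i\})$ for $i=1,2,3$; every combinatorial cube in $[4]^3$ has this form, since a $3$-subset of $[4]$ is the complement of a single point. Put $Y_{ij}=X_i\cap X_j$ and $Z=X_1\cap X_2\cap X_3$. The two-intersection is $T=Y_{12}\cup Y_{13}\cup Y_{23}$, and since any two distinct pairs of indices together involve all three of $1,2,3$, we have $Y_{ij}\cap Y_{kl}=Z$ whenever $\{i,j\}\ne\{k,l\}$; hence inclusion-exclusion collapses to
\[
|T| = |Y_{12}|+|Y_{13}|+|Y_{23}| - 2|Z|.
\]

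Next I would record how the quantities on the right factor over the three coordinates. Each $Y_{ij}$ is itself a prism, namely $([4]\setminus\{\alpha_i,\alpha_j\})\times([4]\setminus\{\beta_i,\beta_j\})\times([4]\setminus\{\gamma_i,\gamma_j\})$, so $|Y_{ij}|=\prod_{t=1}^{3} m^{t}_{ij}$ where $m^{t}_{ij}\in\{2,3\}$ equals $3$ exactly when cubes $i$ and $j$ delete the same value in coordinate $t$; similarly $|Z|=\prod_{t=1}^{3} n^{t}$ with $n^{t}=4-|\{\text{the three values deleted in coordinate }t\}|\in\{1,2,3\}$. I would then classify each coordinate $t$ as Type I (all three deleted values coincide: $m^{t}_{ij}=3$ for every pair and $n^{t}=3$), Type II for a pair $\{i,j\}$ (exactly that pair agrees: $m^{t}_{ij}=3$, the other two $m$'s equal $2$, and $n^{t}=2$), or Type III (all three values distinct: every $m^{t}_{ij}=2$ and $n^{t}=1$). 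Any assignment of types to the three coordinates is realizable, since the coordinates may be chosen independently.

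Finally, I would minimize $|T|$ over all type assignments. Letting $p$, $q_{12}$, $q_{13}$, $q_{23}$, $r$ count the coordinates of each kind and setting $Q=q_{12}+q_{13}+q_{23}$ (so $p+Q+r=3$), substituting the factorizations into the displayed identity gives
\[
|T| = 3^{p}\,2^{Q}\left(2^{r}\sum_{\{i,j\}}\left(\frac{3}{2}\right)^{q_{ij}} - 2\right).
\]
Evaluating this over the finitely many admissible tuples $(p,q_{12},q_{13},q_{23},r)$ shows its minimum is $20$, attained precisely when $p=r=0$ and $q_{12}=q_{13}=q_{23}=1$ --- equivalently, when $X_1,X_2,X_3$ are the complements of the radius-$2$ Hamming balls about the three weight-one points, a configuration that is indeed realizable. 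The only step requiring genuine care is this last optimization: because $|T|$ depends on which pair each Type II coordinate favors, and not merely on the multiset of coordinate types, the minimization must range over the three counts $q_{12},q_{13},q_{23}$ separately rather than over their sum; everything else is straightforward bookkeeping.
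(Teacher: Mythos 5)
Your proof is correct and follows essentially the same route as the paper's: write each cube as the complement of a Hamming ball, apply inclusion-exclusion to get $|T|=\sum_{i<j}|X_i\cap X_j|-2|X_1\cap X_2\cap X_3|$, factor everything coordinatewise, and finish with a finite optimization. The only cosmetic difference is that the paper bounds the sum of pairwise intersections via AM--GM in the Hamming distances and then enumerates over the multiset of coordinate types, whereas you enumerate exactly over the tuples $(p,q_{12},q_{13},q_{23},r)$; your observation that the count depends on which pair each Type II coordinate favors is precisely the issue the paper's AM--GM step is there to absorb.
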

The preceding two lemmas will be proved in the appendix with finite case checks.

\begin{lemma}\label{lemma-four-prisms-impossible}
It is impossible for four $16$-sets, each lying inside some $2 \times 3 \times 3$ prism, to partition $[4]^3$.
\end{lemma}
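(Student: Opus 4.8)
The plan is to prove something slightly stronger: that four $2\times3\times3$ combinatorial prisms cannot even \emph{cover} all of $[4]^3$. This suffices, since a partition covers and each of the four $16$-sets is contained in its prism. So I would suppose $\Pi_1,\dots,\Pi_4$ are $2\times3\times3$ prisms with $\Pi_1\cup\cdots\cup\Pi_4=[4]^3$ and derive a contradiction by double counting over horizontal slices. Write $[4]^3=[4]^2\times[4]$ and let $H_a=[4]^2\times\{a\}$ for $a\in[4]$ be the four slices, each a copy of $[4]^2$. Each $\Pi_i\cap H_a$ is a combinatorial sub-rectangle of $[4]^2$; since $\Pi_i$ has side lengths $2,3,3$, this rectangle is either empty or has dimensions $3\times3$, $2\times3$, or $3\times2$ (the $3\times3$ case occurring exactly when $\Pi_i$ is thin in the sliced direction), so it has size $9$ or $6$. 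Because the $\Pi_i$ cover $[4]^3$, for each $a$ the rectangles $\{\Pi_i\cap H_a : 1\le i\le4\}$ cover $H_a\cong[4]^2$.

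The crux is a planar estimate: \emph{any collection of rectangles, each of dimensions $3\times3$, $2\times3$, or $3\times2$, that covers $[4]^2$ has total area at least $21$.} Granting this, summing over the four slices gives $\sum_a\sum_i|\Pi_i\cap H_a|\ge 4\cdot21=84$; but this double sum also equals $\sum_i|\Pi_i|=4\cdot18=72$, a contradiction that finishes the proof.

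To establish the planar estimate, note every such rectangle has area $9$ or $6$, so the total area is a multiple of $3$, and it is at least $16$ (the size of $[4]^2$), hence at least $18$; it therefore suffices to rule out total area exactly $18$, i.e.\ the cases ``two $3\times3$ rectangles'' and ``three area-$6$ rectangles.'' Two $3\times3$ rectangles $X_1\times Y_1$ and $X_2\times Y_2$ miss every point $(a,b)$ with $a\notin X_1$ and $b\notin Y_2$, so they do not cover. For three area-$6$ rectangles, let $p$ be the number of type $2\times3$ (thin in the first coordinate) and $3-p$ the number of type $3\times2$; since no single area-$6$ rectangle covers all four cells of a line of $[4]^2$, every row and every column of $[4]^2$ must meet at least two of the three rectangles. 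Counting (rectangle, column) incidences gives $2p+3(3-p)\ge 8$, so $p\le1$, while counting (rectangle, row) incidences gives $3p+2(3-p)\ge 8$, so $p\ge2$; these are incompatible. I expect the main obstacle to be pinning down the constant $21$, which forces one to use the precise list of slice-shapes: the analogous bound for arbitrary rectangles of dimensions at most $3\times3$ is false, since two $3\times3$ rectangles together with two $1\times1$ rectangles have total area $20$ yet can cover $[4]^2$. Beyond that, only the incidence count in the area-$18$ case is mildly delicate, and the rest is bookkeeping.
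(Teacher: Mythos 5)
Your proof is correct, but it takes a genuinely different route from the paper's. The paper also reduces to showing that four $2\times3\times3$ prisms cannot cover $[4]^3$ (in fact it shows three $3\times3\times3$ cubes plus one such prism cannot cover), but it then argues by pigeonhole on missing coordinates: each cube omits at least one value of the $x$-coordinate and the prism omits two, giving five omissions among only four values, so some cross-section $\{x=c\}\cong[4]^2$ meets at most two of the four sets and would have to be covered by at most two $3\times3$ squares, which is impossible. You instead run a global double count over all four slices, supported by the sharper planar estimate that any cover of $[4]^2$ by rectangles of shape $3\times3$, $2\times3$, or $3\times2$ has total area at least $21$; comparing $4\cdot 21=84$ against $\sum_i\lvert\Pi_i\rvert=72$ gives the contradiction. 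The paper's pigeonhole is shorter and only needs the easy fact that two $3\times3$ squares miss a point of $[4]^2$; your argument costs an extra case analysis (ruling out total area $18$ via the row/column incidence counts, which check out: $9-p\ge 8$ forces $p\le 1$ while $p+6\ge 8$ forces $p\ge 2$), but in exchange it is an averaging argument that does not require locating a special cross-section, and the constant $21$ is a genuinely stronger planar statement, as your $9+9+1+1$ example shows it really depends on the restricted list of slice shapes. Both proofs are complete and elementary.
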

\begin{proof}
We claim that three $3 \times 3 \times 3$ cubes and one $2 \times 3 \times 3$ prism cannot cover $[4]^3$, from which it follows that four $2 \times 3 \times 3$ prisms cannot cover $[4]^3$. This would certainly be enough. 

Assume for the sake of contradiction that such a covering were possible and look at the $x$-coordinates missing from each set. Without loss of generality, the $2\times 3 \times 3$ prism is oriented so that two $x$-coordinates are missing. One possible $x$-coordinate is missing from each from the cubes, for a total of five, so some $x$-coordinate is missing twice. Therefore in that $4 \times 4$ cross section of $[4]^3$, only two of the four sets appear. However, it is impossible to cover a $4 \times 4$ square with at most two $3 \times 3$ squares, so we have arrived at the desired contradiction.
\end{proof}

We are now ready to prove Theorem~\ref{thm-k33}, which states that $\HG\left(K_{3,3}\right) = 3$.

\begin{proof}[Proof of Theorem~\ref{thm-k33}]

Since $\HG\left(K_{2,2}\right) = 3$ and it is a subgraph of $K_{3,3}$, $\HG\left(K_{3,3}\right) \geq 3$. Since $\HG\left(K_{m,n}\right) \leq \min(m, n) + 1$ by a result of~\cite{gadgeor}, we know that $\HG\left(K_{3,3}\right) \leq 4$. It remains to show that $\HG\left(K_{3,3}\right) \neq 4$.

Suppose for the sake of contradiction that $\HG\left(K_{3,3}\right) = 4$.  By Lemma~\ref{lemma-cube-condition}, $\HG\left(K_{3,3}\right) = 4$ if and only if there are three partitions of a $4 \times 4 \times 4$ cube into four parts each, such that the union of one part from each partition always contains a $3 \times 3 \times 3$ cube (i.e., a set of the form\footnote{The complement of a set $S$ is denoted by $\overline{S}$.} $\overline{\{p\}} \times \overline{\{q\}} \times \overline{\{r\}}$, for $p, q, r \in [4]$).

We will denote the three partitions of $[4]^3$ as $P$, $Q$, and $R$. Without loss of generality, the parts in $P$, which are $P_1$, $P_2$, $P_3$, and $P_4$, are labeled such that $|P_1| \le |P_2| \le |P_3| \le |P_4|$. The parts in $Q$, which are $Q_1$, $Q_2$, $Q_3$, and $Q_4$, are labeled such that $|Q_1 \setminus P_1| \le |Q_2 \setminus P_1| \le |Q_3 \setminus P_1| \le |Q_4 \setminus P_1|$. The parts in $R$, which are $R_1$, $R_2$, $R_3$, and $R_4$, are labeled arbitrarily. 

The first part of this proof is to show that partition $P$ must be balanced in order for all choices of $P_i \cup Q_j \cup R_k$ to contain a $3\times 3 \times 3$ cube. The idea is to repeatedly exploit the fact that since the $R_k$ are disjoint sets, $P_i \cup Q_j$ contains the two-intersection of four $3\times 3\times 3$ cubes in $[4]^3$.

By these assumptions, we get $|P_1| \leq 16$ and $|Q_1 \setminus P_1| \le 12$, so $|P_1 \cup Q_1| \leq 28$. By Lemma~\ref{lemma-cube-condition}, $P_1 \cup Q_1 \cup R_i$ must contain a $3 \times 3 \times 3$ cube $D_i$ for $1 \leq i \leq 4$. Since the $R_i$ are disjoint, $\bigcap_{i = 1}^4 (P_1 \cup Q_1 \cup R_i) = P_1 \cup Q_1$. In fact, any point in the two-intersection of $D_1, \dots, D_4$ must lie inside this set $P_1 \cup Q_1$ of at most 28 points (see Definition~\ref{def-m-inter}). By Lemma~\ref{lemma-cube-or-minusPoint}, if $D_1, \dots, D_4$ two-intersect in at most 29 points, then their two-intersection is either a $3 \times 3 \times 3$ cube or a $3 \times 3 \times 3$ cube missing one point. It follows that $|P_1 \cup Q_1| \geq 26$, so $|Q_1 \setminus P_1| \geq 10$. We can now apply the pigeonhole principle to find that $|Q_2 \setminus P_1|  \leq \floor*{\frac{1}{3} (4^3 - 26)} = 12$. We consider whether $|Q_2 \setminus P_1| \geq 11$ or not.
\begin{claim}
$|Q_2 \setminus P_1| \le 10$.
\end{claim}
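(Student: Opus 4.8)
The plan is to argue by contradiction, assuming $|Q_2 \setminus P_1| \ge 11$. Since we have already shown $|Q_2 \setminus P_1| \le 12$, this forces $|Q_2 \setminus P_1| \in \{11, 12\}$, and the sorted labeling of $Q$ then also gives $|Q_j \setminus P_1| \ge 11$ for $j = 2, 3, 4$. The structural fact to lean on — already used above for $j = 1$ — is that because the $R_k$ are pairwise disjoint, each set $P_1 \cup Q_j$ contains the two-intersection of the four $3 \times 3 \times 3$ cubes $D_1^{(j)}, \dots, D_4^{(j)}$ that Lemma~\ref{lemma-cube-condition} provides inside $P_1 \cup Q_j \cup R_1, \dots, P_1 \cup Q_j \cup R_4$ respectively.

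First I would verify that $|P_1 \cup Q_j| \le 29$ for every $j \in \{1, 2, 3\}$, so that Lemma~\ref{lemma-cube-or-minusPoint} applies in all three cases. For $j \in \{1, 2\}$ this is immediate from $|P_1| \le 16$ and $|Q_j \setminus P_1| \le 12$. For $j = 3$, suppose instead that $|P_1 \cup Q_3| = |P_1| + |Q_3 \setminus P_1| \ge 30$; then $|Q_3 \setminus P_1| \ge 30 - |P_1|$, and hence $|Q_4 \setminus P_1| \ge |Q_3 \setminus P_1| \ge 30 - |P_1|$ as well, so that $64 = |P_1| + \sum_{j=1}^{4} |Q_j \setminus P_1| \ge |P_1| + 10 + 11 + 2(30 - |P_1|) = 81 - |P_1|$, which forces $|P_1| \ge 17$ and contradicts $|P_1| \le 16$. (This step uses the already-established bounds $|Q_1 \setminus P_1| \ge 10$ and $|Q_2 \setminus P_1| \ge 11$.)

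Next, for each $j \in \{1, 2, 3\}$, Lemma~\ref{lemma-cube-or-minusPoint} tells us that the two-intersection of $D_1^{(j)}, \dots, D_4^{(j)}$ is a $3 \times 3 \times 3$ cube $C_j$ or such a cube with one point deleted; call this set $X_j$, so that $X_j \subseteq P_1 \cup Q_j$ and $X_j$ is obtained from $C_j$ by removing at most one point. Applying Lemma~\ref{lemma-three-cubes} to $C_1, C_2, C_3$, their two-intersection has at least $20$ points, and since passing from each $C_j$ to $X_j$ discards at most one point, the two-intersection of $X_1, X_2, X_3$ still has at least $20 - 3 = 17$ points. But any point lying in two of $X_1, X_2, X_3$ lies in two of $P_1 \cup Q_1$, $P_1 \cup Q_2$, $P_1 \cup Q_3$, hence in $P_1$, because the $Q_j$ are pairwise disjoint. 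This gives $|P_1| \ge 17$, contradicting $|P_1| \le 16$ and establishing the claim.

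I expect the main obstacle to be recognizing that one should run the two-intersection argument simultaneously for \emph{three} values of $j$ and feed the result into the three-cube bound of Lemma~\ref{lemma-three-cubes}, rather than pushing the $j = 1, 2$ analysis harder; once that idea is in place, the only real care needed is the short counting check that Lemma~\ref{lemma-cube-or-minusPoint} legitimately applies to $P_1 \cup Q_3$, carried out in the first step above.
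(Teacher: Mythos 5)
Your proof is correct and follows essentially the same route as the paper's: bound $|P_1\cup Q_j|\le 29$ for $j=1,2,3$, apply Lemma~\ref{lemma-cube-or-minusPoint} to place a near-cube inside each, and then use Lemma~\ref{lemma-three-cubes} together with $\bigcap_{j=1}^3(P_1\cup Q_j)=P_1$ to force $|P_1|\ge 17$. The only cosmetic difference is in the bookkeeping for $|P_1\cup Q_3|\le 29$, where the paper uses a direct pigeonhole bound $|Q_3\setminus P_1|\le\lfloor\frac{1}{2}(64-26-11)\rfloor=13$ and you run the same count as a contradiction; both are fine.
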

\begin{proof}
Assume for the sake of contradiction that $|Q_2 \setminus P_1| \geq 11$. Then, we can apply the pigeonhole principle again to find that $|Q_3 \setminus P_1| \leq \floor*{\frac{1}{2} (4^3 - 26 - 11)} = 13$. Thus, $|Q_1 \setminus P_1| \leq |Q_3 \setminus P_1| \leq 13$ and $|Q_2 \setminus P_1| \leq |Q_3 \setminus P_1| \leq 13$. Recall that $|P_1| \leq 16$. Then, $|P_1 \cup Q_1|$, $|P_1 \cup Q_2|$, and $|P_1 \cup Q_3|$ are all at most 29. Furthermore by applying Lemma~\ref{lemma-cube-condition} and Lemma~\ref{lemma-cube-or-minusPoint}, the sets $P_1 \cup Q_1$, $P_1 \cup Q_2$, and $P_1 \cup Q_3$ must each contain all but at most one point of some $3 \times 3 \times 3$ cube. 

Since the $Q_i$ are disjoint, $\bigcap_{i = 1}^3 (P_1 \cup Q_i) = P_1$. These three sets, each missing at most one point from some $3 \times 3 \times 3$ cube, two-intersect in at most $|P_1|\le 16$ points. However, by Lemma~\ref{lemma-three-cubes}, three full $3 \times 3 \times 3$ cubes must two-intersect in at least $20$ points. Removing one point from a $3 \times 3 \times 3$ cube can remove at most one point from the resulting two-intersection. Thus the two-intersection of $\{P_1 \cup Q_i\}_{i=1}^3$ must contain at least $20-3=17$ points. This is a contradiction.
\end{proof}
\begin{claim}
The partition $P$ is balanced; that is, all of the parts are of size 16.
\end{claim}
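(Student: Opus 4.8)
The plan is to squeeze $|P_1|$ between two bounds that have already been obtained. On the one hand, $P_1$ is the smallest of four parts partitioning the $64$-point set $[4]^3$, so $|P_1| \le 16$. On the other hand, we showed above (via Lemma~\ref{lemma-cube-condition} and Lemma~\ref{lemma-cube-or-minusPoint} applied to the cubes $D_1,\dots,D_4 \subseteq P_1 \cup Q_1 \cup R_i$) that the two-intersection of $D_1,\dots,D_4$ is a $3\times 3\times 3$ cube or such a cube missing one point, and this two-intersection lies inside $P_1 \cup Q_1$; hence $|P_1 \cup Q_1| \ge 26$.

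Next I would bound $|Q_1 \setminus P_1|$ from above. By the preceding Claim, $|Q_2 \setminus P_1| \le 10$, and since the parts of $Q$ are ordered so that $|Q_1 \setminus P_1| \le |Q_2 \setminus P_1|$, we get $|Q_1 \setminus P_1| \le 10$. Combining the two estimates,
\[
16 \ge |P_1| = |P_1 \cup Q_1| - |Q_1 \setminus P_1| \ge 26 - 10 = 16,
\]
so equality holds throughout and $|P_1| = 16$ (and, as a byproduct, $|Q_1 \setminus P_1| = 10$, which one could record for later use).

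Finally, since $|P_1| \le |P_2| \le |P_3| \le |P_4|$ and these four sizes sum to $64$ with $|P_1| = 16$, every part must have size exactly $16$, so $P$ is balanced. I do not expect a genuine obstacle here: all the geometric content has been absorbed into the earlier lemmas and the preceding Claim, and what remains is a one-line counting argument. The only point requiring care is checking that the displayed chain of inequalities is tight, which is precisely what forces $|P_1| = 16$ rather than merely $|P_1| \ge 16$.
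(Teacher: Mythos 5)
Your proposal is correct and follows essentially the same route as the paper: both arguments combine $|P_1|\le 16$, $|P_1\cup Q_1|\ge 26$, and $|Q_1\setminus P_1|\le |Q_2\setminus P_1|\le 10$ to force $|P_1|=16$ and hence a balanced partition. The only difference is presentational (you squeeze $|P_1|$ directly, while the paper first records $|Q_1\setminus P_1|=|Q_2\setminus P_1|=10$), which changes nothing of substance.
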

\begin{proof}
From the previous claim, $|Q_2 \setminus P_1| \le 10$ and we already know that $|Q_1 \setminus P_1| \geq 10$, so $|Q_1 \setminus P_1| = |Q_2 \setminus P_1| = 10$. This and Lemma~\ref{lemma-cube-or-minusPoint} imply that the partition $P$ is balanced, since the smallest part $P_1$ has to have size at least $16$ in order for $P_1 \cup Q_1$ to contain at least $26$ points.
\end{proof}
We now know $|P_1| = 16$. Then, since $|P_1 \cup Q_1| = |P_1 \cup Q_2| = 26$, $P_1$ is a $16$-set such that adding two disjoint $10$-sets, $Q_1 \setminus P_1$ and $Q_2 \setminus P_1$, creates two distinct $26$-sets contained inside $3 \times 3 \times 3$ cubes.
\begin{claim}
The set $P_1$ consists of $16$ points in a $2 \times 3 \times 3$ prism.
\end{claim}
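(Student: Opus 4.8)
The plan is to pin down the two $3 \times 3 \times 3$ cubes that contain $P_1 \cup Q_1$ and $P_1 \cup Q_2$, and then intersect them. From the discussion immediately preceding the claim, $P_1 \cup Q_1$ and $P_1 \cup Q_2$ are distinct $26$-sets, each contained in some $3 \times 3 \times 3$ cube. Since a $3 \times 3 \times 3$ cube has $27$ points, I would first observe that each of these sets is in fact a cube with exactly one point removed: write $P_1 \cup Q_1 = C \setminus \{a\}$ and $P_1 \cup Q_2 = C' \setminus \{a'\}$, where $C = \overline{\{p\}} \times \overline{\{q\}} \times \overline{\{r\}}$ and $C' = \overline{\{p'\}} \times \overline{\{q'\}} \times \overline{\{r'\}}$ are $3 \times 3 \times 3$ cubes and $a \in C$, $a' \in C'$.

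The key step is that, since $Q_1$ and $Q_2$ are disjoint parts of the partition $Q$, we have $(P_1 \cup Q_1) \cap (P_1 \cup Q_2) = P_1 \cup (Q_1 \cap Q_2) = P_1$, so in particular $P_1 \subseteq C \cap C'$. Next I would rule out $C = C'$: if the two cubes coincided, then $P_1$, $Q_1 \setminus P_1$, and $Q_2 \setminus P_1$ would be three pairwise disjoint subsets of the common $27$-point cube $C$ (using $Q_1, Q_2 \subseteq C$), with total size $16 + 10 + 10 = 36 > 27$, which is impossible.

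Finally, two distinct cubes $C$ and $C'$ of the form $\overline{\{\cdot\}} \times \overline{\{\cdot\}} \times \overline{\{\cdot\}}$ differ in one, two, or three of their defining coordinates. Accordingly, $C \cap C' = \overline{\{p,p'\}} \times \overline{\{q,q'\}} \times \overline{\{r,r'\}}$ is a $2 \times 3 \times 3$ prism ($18$ points), a $2 \times 2 \times 3$ prism ($12$ points), or a $2 \times 2 \times 2$ cube ($8$ points), respectively. Since $P_1 \subseteq C \cap C'$ and $|P_1| = 16$, only the first case can occur, so $C \cap C'$ is a $2 \times 3 \times 3$ prism containing the $16$-set $P_1$, which is exactly what the claim asserts.

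I do not expect a serious obstacle here: the argument is cardinality bookkeeping combined with the explicit description of $3 \times 3 \times 3$ cubes in $[4]^3$ and their pairwise intersections. The only points requiring a little care are confirming that each enclosing cube is determined exactly (not merely that it contains $P_1 \cup Q_i$), which is forced by the sizes, and that the two enclosing cubes are genuinely distinct, which is handled by the $36 > 27$ count.
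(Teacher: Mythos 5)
Your proposal is correct and follows essentially the same route as the paper: identify $P_1\cup Q_1$ and $P_1\cup Q_2$ as cubes minus a point, note their intersection is exactly $P_1$, show the two cubes are distinct, and observe that among the possible intersections of two distinct $3\times3\times3$ cubes only a $2\times3\times3$ prism is large enough to hold a $16$-set. The only cosmetic difference is that you rule out $C=C'$ by a union count ($36>27$) where the paper uses the intersection count ($|P_1|=16<25$); both are fine.
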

\begin{proof}
From the previous claims, $P_1 \cup Q_1 = C_1 \setminus \{p_1\}$ and $P_1 \cup Q_2 = C_2 \setminus \{p_2\}$ for some $3 \times 3 \times 3$ cubes $C_1$ and $C_2$ and points $p_1$ and $p_2$. These two sets intersect precisely at $P_1$, implying that $C_1 \neq C_2$. On the other hand, two distinct cubes intersect in either a $2 \times 2 \times 2$ cube, a $2 \times 2 \times 3$ prism, or a $2 \times 3 \times 3$ prism. Since $P_1$ is a $16$-set that lies inside this intersection, only the last option is large enough and $P_1$ consists of $16$ points in a $2\times 3 \times 3$ prism.
\end{proof}
Finally, since the partition is balanced, the entire argument is symmetric and we see that every $P_i \in P$ consists of $16$ points in a $2\times 3\times 3$ prism. By Lemma~\ref{lemma-four-prisms-impossible}, four sets of this structure cannot partition $[4]^3$. Thus the sets $P_i$ are not a partition of $[4]^3$, which is the desired contradiction.
\end{proof}

\section{Windmills} \label{sec-windmills}
In this section, we determine the hat-guessing number of most windmill graphs, defined below. In particular, we prove Theorems~\ref{thm-wd-2k-2} and \ref{thm-wd-dn}.
\begin{definition}
    The \emph{windmill graph} $W_{k,n}$ is the graph on $(k-1)n + 1$ vertices obtained by gluing $n$ copies of $K_k$ together at a single vertex. We call the single distinguished vertex the {\it axle} of $W_{k,n}$ and each of the $n$ disjoint copies of $K_{k-1}$ not containing the axle a {\it blade} of $W_{k,n}$.
\end{definition}
Our plan of attack is to reduce hat-guessing on $W_{k,n}$ to modified hat-guessing problems on the individual blades, where the set of possible hat assignments is restricted to a prescribed subset of $[q]^{k-1}$. The following definition appears in~\cite{alon2018hat}.
\begin{definition} \label{def-partially-q}
    If $G$ is a graph on $n$ vertices, we say that a $S\subseteq [q]^n$ of possible hat assignments is a {\it solvable set of $G$ with $q$ colors} if the hat-guessing game on $G$ with $q$ colors can be won with the additional information that the hat assignment is in $S$. If $G$ and $q$ are clear from context, we simply say that $S$ is a solvable set.
\end{definition}
Thus $S=[q]^n$ is a solvable set of $G$ if and only if $\HG(G) \geq q$. In this section we will be primarily concerned with solvable sets of complete graphs.

\begin{lemma} \label{lem-H}
    If $q \ge n \ge 1$, then the size of the largest solvable set of $K_n$ with $q$ colors is $nq^{n-1}$.
\end{lemma}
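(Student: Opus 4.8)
The plan is to prove the two matching inequalities: $|S|\le nq^{n-1}$ for every solvable set $S$ of $K_n$ with $q$ colors, and that some solvable set attains size $nq^{n-1}$. Both directions are short counting arguments, with the lower bound coming from an explicit modular strategy generalizing the classical one for $K_n$.

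\emph{Upper bound.} First I would note that in $K_n$ every player sees all of the others, so a strategy is simply a tuple of guessing functions $g_1,\dots,g_n$, where $g_i$ reads the hats on the $n-1$ vertices other than $i$ and outputs a guess in $[q]$. Let $W_i\subseteq[q]^n$ be the set of hat assignments on which player $i$ guesses correctly; this is the graph of the function $g_i$, so for each of the $q^{n-1}$ assignments to the coordinates other than $i$ there is exactly one value of the $i$-th coordinate landing in $W_i$, giving $|W_i|=q^{n-1}$. A strategy wins on an assignment $x$ exactly when $x\in W_1\cup\dots\cup W_n$, so any solvable set satisfies $|S|\le\sum_{i=1}^n|W_i|=nq^{n-1}$.

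\emph{Lower bound.} For the construction I would identify the color set $[q]$ with $\Z/q\Z$ and, using the hypothesis $q\ge n$, fix $n$ distinct residues $c_1,\dots,c_n\in\Z/q\Z$. Let player $i$ guess the color $c_i-\sum_{j\ne i}x_j\pmod q$; then player $i$ guesses correctly precisely when $\sum_{j=1}^n x_j\equiv c_i\pmod q$. Since the $c_i$ are distinct, the sets $W_i=\{x\in[q]^n:\sum_j x_j\equiv c_i\pmod q\}$ are pairwise disjoint, each of size $q^{n-1}$, and their union $S$ — the set of all assignments whose coordinate sum is congruent to one of $c_1,\dots,c_n$ — is solvable with $|S|=nq^{n-1}$.

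I do not expect a genuine obstacle here. The two points worth checking carefully are that the ``graph of a function'' count in the upper bound uses completeness of $K_n$ (each player's guess depends only on what she sees, which is everything else), and that choosing $n$ distinct target residues in the lower bound is exactly where the hypothesis $q\ge n$ is used; apart from these, the proof is pure bookkeeping.
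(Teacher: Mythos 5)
Your proposal is correct and follows essentially the same route as the paper: the upper bound counts the winning sets $W_i$ as graphs of the guessing functions (each of size $q^{n-1}$), and the lower bound uses the modular-sum strategy with $n$ distinct target residues, which is exactly the paper's construction with targets $0,\dots,n-1$.
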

\begin{proof}
    First, we show that the size of the largest solvable set of $K_n$ with $q$ colors is at most $nq^{n-1}$. Let $S_i$ be the set of hat assignments in which player $i$ guesses correctly, $x_i$ be the hat color of the $i$th player, and $f_i$ be the guessing function of the $i$th player. Then for all $(x_1,\dots,x_n) \in S_i$, $x_i = f_i(x_1,\dots,\hat{x_i},\dots,x_n)$.\footnote{We use $\hat{x_i}$ to mean that $x_i$ is omitted from the list.} Then $|S_i| \le q^{n-1}$ because $x_i$ is determined by the other $n-1$ hat colors. So the size of the largest solvable set of $K_n$ with $q$ colors is at most $\sum_{i=1}^n |S_i| \le nq^{n-1}$.
    
    Next, we show that the size of the largest solvable set of $K_n$ with $q$ colors is at least $nq^{n-1}$. Let $S$ be all of those hat assignments for which the sum of the hat colors is between $0$ and $n-1 \pmod q$. If we index the players from $0$ to $n-1$, then each player guesses that its hat color is the one that will make the sum of all their hat colors equal to its index modulo $q$. Thus $S$ is a partially $q$-solvable set for $K_n$ with $nq^{n-1}$ elements.
\end{proof}

Next, we need a simple lemma that reduces hat-guessing on windmills to packing certain solvable sets on disjoint unions of cliques. It is the analog of Lemma~\ref{lemma-cube-condition} for windmills.

\begin{lemma}\label{lem-windmill-condition}
    We have $\HG(W_{k,n})\ge q$ if and only if there exists $q$ disjoint sets in $[q]^{(k-1)n}$ of the form
    \[
    \overline{S_1} \times \overline{S_2} \times \cdots \times \overline{S_n}
    \]
    where $S_i\subseteq [q]^{k-1}$ is a solvable set of $K_{k-1}$.
\end{lemma}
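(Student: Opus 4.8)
The plan is to prove both directions by translating between a winning strategy on $W_{k,n}$ and a family of strategies on the $n$ blades, indexed by the color the axle happens to wear. Write the axle as $v$ and the blades as $B_1,\dots,B_n$, each a copy of $K_{k-1}$; the key structural fact is that every blade vertex is adjacent to $v$ and hence sees $v$'s color. A strategy on $W_{k,n}$ therefore amounts to a guessing function $g\colon [q]^{(k-1)n}\to[q]$ for the axle (which sees all blade colors), together with, for each possible axle color $a\in[q]$, a strategy for the blade vertices on the disjoint union $\bigsqcup_{i=1}^{n} B_i$. The axle guesses correctly exactly on the fibers $T_a:=g^{-1}(a)$, and these partition $[q]^{(k-1)n}$ into $q$ parts, so the team wins if and only if for every $a$ the blade strategy attached to axle color $a$ produces a correct guess on all of $\overline{T_a}$.

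Next I would observe that a strategy on $\bigsqcup_i B_i$ is just a choice of $K_{k-1}$-strategy on each blade separately, and that such a combined strategy succeeds on an assignment $(\vec{x}_1,\dots,\vec{x}_n)\in\prod_i[q]^{k-1}$ precisely when $\vec{x}_i$ lies in the success set $S_i\subseteq[q]^{k-1}$ of the $i$-th blade's strategy for at least one $i$. Each such $S_i$ is, by definition, a solvable set of $K_{k-1}$ with $q$ colors (the strategy wins on all of $S_i$). Taking complements and applying De Morgan, the set of assignments on which the combined blade strategy \emph{fails} is exactly the product $\overline{S_1}\times\cdots\times\overline{S_n}$. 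Hence "the blade strategy for axle color $a$ succeeds on all of $\overline{T_a}$'' is equivalent to $\overline{S_1^{(a)}}\times\cdots\times\overline{S_n^{(a)}}\subseteq T_a$ for suitable solvable sets $S_i^{(a)}$.

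For the forward direction, from a winning strategy I extract the fibers $T_a$ of the axle's guessing function and the solvable sets $S_i^{(a)}$ as above; since the $T_a$ are pairwise disjoint, so are the product sets $\overline{S_1^{(a)}}\times\cdots\times\overline{S_n^{(a)}}\subseteq T_a$, giving the required $q$ disjoint sets. For the converse, given $q$ pairwise disjoint sets $\overline{S_1^{(a)}}\times\cdots\times\overline{S_n^{(a)}}$ with each $S_i^{(a)}$ a solvable set of $K_{k-1}$, I define $g$ so that $g^{-1}(a)\supseteq\overline{S_1^{(a)}}\times\cdots\times\overline{S_n^{(a)}}$ — this is consistent because these sets are disjoint, and $g$ may be chosen arbitrarily off their union — and I let the blade vertices, on seeing axle color $a$, run strategies witnessing the solvability of the $S_i^{(a)}$. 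If the axle guesses wrongly, the assignment avoids $g^{-1}(a)$, hence avoids the product set, so by De Morgan $\vec{x}_i\in S_i^{(a)}$ for some $i$ and the corresponding blade contains a correct guesser.

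I do not expect a genuine obstacle here: this is essentially a bookkeeping lemma, the windmill analog of Lemma~\ref{lemma-cube-condition}. The only points that need care are the De Morgan step — that "some blade succeeds'' corresponds to the failure set being a \emph{product} of blade-complements, which is what forces the product form $\overline{S_1}\times\cdots\times\overline{S_n}$ — and keeping track throughout that the blade vertices may, and indeed must be permitted to, tailor their $K_{k-1}$-strategy to the color they observe on the axle.
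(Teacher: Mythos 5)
Your proof is correct and follows essentially the same route as the paper's: the axle's guessing function partitions the blade colorings into fibers, the blade vertices condition their $K_{k-1}$-strategies on the observed axle color, and De Morgan converts ``every blade fails'' into the product set $\overline{S_1}\times\cdots\times\overline{S_n}$, which must sit inside the corresponding fiber. You even spell out the ``only if'' direction, which the paper dismisses as ``similar,'' so nothing is missing.
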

\begin{proof}
We show how to give a winning guessing strategy for $W_{k,n}$ given such a collection of $q$ sets. If these sets are $P_1, P_2, \ldots , P_q \subseteq [q]^{(k-1)n}$, arbitrarily expand these sets to a partition $Q_1 \sqcup \cdots \sqcup Q_q = [q]^{(k-1)n}$ where $P_i \subseteq Q_i$. If $v$ is the axle of $W_{k,n}$, this is a partition of the possible colorings of $W_{k,n}\setminus \{v\}$. Let $v$ guess color $i$ if it sees that a coloring of $W_{k,n} \setminus \{v\}$ that lies in $Q_i$.

It remains to give the guessing strategies for the other vertices of $W_{k,n}$. Suppose $P_i = \overline{S_{i,1}} \times \overline{S_{i,2}} \times \cdots \times \overline{S_{i,n}}$ where $S_{i,j}\subseteq [q]^{k-1}$ is a solvable set of the $j$-th blade, which is a copy of $K_{k-1}$. 

Fix some $1\le j \le n$ and consider the vertices of the $j$-th blade of $W_{k,n}$. Let $c_j \in [q]^{k-1}$ be the vector of hat assignments on these vertices. Since $S_{i,j}$ is a solvable set of $K_{k-1}$, it follows that there exists some guessing strategy $g_{i,j}$ on blade $j$ which guarantees that some player guesses correctly if $c_j \in S_{i,j}$. 

The guessing strategy for vertices of blade $j$ will be as follows. Each non-axle vertex first records the color $i$ as the color of the axle vertex $v$. Then, the vertices of blade $j$ restrict their attention to the other vertices of the same blade and follow guessing strategy $g_{i,j}$ so as to guarantee a win if $c_j \in S_{i,j}$.

We see that as long as $c_j \in S_{i,j}$ for some $j$, some vertex in blade $j$ guesses correctly and the players win. On the other hand, if $c_j \not \in S_{i,j}$ for any $j$, then that means that $c_1 \times c_2 \times \cdots \times c_n \in P_i\subseteq Q_i$, and so the axle $v$ guesses color $i$ and thus guesses correctly. In any case, we have a winning strategy and this completes the proof of the if direction. The only if direction is similar.
\end{proof}

The final ingredient for Theorem~\ref{thm-wd-2k-2} is the existence of a specific type of solvable set for $K_{k-1}$ with $2k-2$ colors.

\begin{lemma} \label{c-comp}
    For all $k\ge 2$, there is a set $C\subseteq[2k-2]^{k-1}$ such that both $C$ and $\overline{C}$ are solvable for $K_{k-1}$ with $2k-2$ colors.
\end{lemma}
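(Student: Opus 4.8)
The plan is to take for $C$ a ``lower half'' of the standard modular solvable set for a clique. Write $m = k-1$ and $q = 2k-2 = 2m$, so the lemma asks for a set $C \subseteq [2m]^m$ with both $C$ and $\overline{C}$ solvable for $K_m$ with $2m$ colors. Define
\[
C = \Bigl\{\, x = (x_1,\dots,x_m) \in [2m]^m \ :\ \textstyle\sum_{i=1}^m x_i \ \bmod\ 2m \ \in\ \{0,1,\dots,m-1\} \,\Bigr\},
\]
so that $\overline{C}$ is precisely the set of points whose coordinate sum is congruent to one of $m, m+1, \dots, 2m-1$ modulo $2m$. Since the coordinate sum of each point reduces to a unique residue modulo $2m$, lying in exactly one of the two length-$m$ blocks of residues, $C$ and $\overline{C}$ partition $[2m]^m$.

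To finish I would exhibit the winning strategies, which are the two ``halves'' of the sum-guessing strategy from Lemma~\ref{lem-H}. Index the vertices of $K_m$ by $0,1,\dots,m-1$. Under the promise $x \in C$, let player $j$ guess the unique color $c$ for which replacing $x_j$ by $c$ makes $\sum_i x_i \equiv j \pmod{2m}$; this is well-defined because player $j$ sees the other $m-1$ coordinates. If $x \in C$, its sum is $\equiv j_0 \pmod{2m}$ for a unique $j_0 \in \{0,\dots,m-1\}$, and player $j_0$ guesses correctly, so $C$ is solvable. Under the promise $x \in \overline{C}$, let player $j$ instead guess the color that makes $\sum_i x_i \equiv m + j \pmod{2m}$; again exactly one player guesses correctly, so $\overline{C}$ is solvable as well.

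I do not expect a real obstacle: the argument is just a bookkeeping check that splitting the $2m$ target residues of the sum-strategy into two blocks of size $m$ yields two complementary solvable sets, each of which happens to have the maximum size $m(2m)^{m-1}$ allowed by Lemma~\ref{lem-H}. The only degenerate point to verify is $k = 2$, where $K_{k-1} = K_1$ has no edges: then $q = 2$, $C = \{0\}$, $\overline{C} = \{1\}$, and the lone player guesses $0$ (resp.\ $1$) under the two promises, consistent with the general description above.
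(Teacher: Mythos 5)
Your proof is correct, but it takes a genuinely different route from the paper. The paper partitions $[2k-2]^{k-1}$ into $2^{k-1}$ subcubes of side length $k-1$, takes $C$ to be the union of the subcubes whose index vector has odd parity, and argues that under either promise every vertex can identify the common subcube and then play the standard $(k-1)$-color strategy on $K_{k-1}$. You instead observe that the maximum-size solvable set from Lemma~\ref{lem-H} --- assignments whose coordinate sum lies in a block of $n$ consecutive residues mod $q$ --- has size exactly $nq^{n-1} = \tfrac{1}{2}q^{n}$ when $q = 2n$ (here $n = k-1$, $q=2k-2$), so the cube splits into two complementary such sets, each solvable by assigning the $m$ target residues to the $m$ players. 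Your verification is sound, including the degenerate case $k=2$, and the downstream use in Theorem~\ref{thm-wd-2k-2} needs nothing from $C$ beyond the stated property, so your $C$ serves equally well. Your construction is shorter, reuses Lemma~\ref{lem-H} directly, and is of the same form $S(A)$ as the sets used later in the proof of Theorem~\ref{thm-wd-dn}, which makes the section more uniform; the paper's subcube partition is more geometric and makes visible the reduction to a genuinely smaller hat-guessing instance, which is the intuition the authors seem to want to convey.
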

\begin{proof}
    The set $C$ is to be a subset of $[2k-2]^{k-1}$. For a binary vector $v \in \{0,1\}^{k-1}$ define the subhypercube $C_v \subset [2k-2]^{k-1}$ to be 
    \[
    C_v \coloneqq \{x\in [2k-2]^{k-1} \mid (k-1)v_i \leq x_i < (k-1)(v_i + 1) \text{ for all } i\le k-1\}. 
    \]
    Thus, the sets $C_v$ partition $[2k-2]^{k-1}$ into $2^{k-1}$ hypercubes with side length $k-1$. Let $C$ be the union of those $C_v$ for which $v\in \{0,1\}^{k-1}$ has an odd number of $1$s.
    
    We now check that $C$ is solvable. Indeed, in the hat-guessing game on $K_{k-1}$, each vertex $u_i$ sees the colors on all the other vertices, and can determine from this information exactly two possible hypercubes $C_v$, $C_{v'}$ in which the hat assignment vector must lie. The binary vectors $v, v'$ will differ in exactly coordinate $i$ because $u_i$ has no information about its own color. However, with the additional constraint that the hat assignment is in $C$, exactly one of these vectors $v, v'$ will have an odd number of $1$'s. In other words, every vertex in $K_{k-1}$ will be able to determine the (same) hypercube $C_v$ with side length $k-1$ in which the hat assignment vector lies.
    
    From this point, the game is reduced to the $(k-1)$-color hat-guessing game on $K_{k-1}$, which we know to be a guaranteed win for the players. This proves that $C$ is solvable, and the proof for $\overline{C}$ is analogous.
\end{proof}

We are ready to prove Theorem~\ref{thm-wd-2k-2}, which states that for $k\ge 2$ and $n \ge \log_2(2k-2)$,
\[\HG(W_{k,n}) = 2k-2.\]

\begin{proof}[Proof of Theorem~\ref{thm-wd-2k-2}.]
We first show the upper bound. Suppose $\HG(W_{k,n}) = q \geq 2k-1$ for the sake of contradiction. By Lemma~\ref{lem-windmill-condition}, there exists $q\ge 2$ disjoint subsets of $[q]^{(k-1)n}$ which are products of complements of solvable sets of $K_{k-1}$.
    
    Consider two of these subsets, $\overline{S_1} \times \dots \times \overline{S_n}$ and $\overline{T_1} \times \dots \times \overline{T_n}$, where $S_i$ and $T_i$ are solvable sets of $K_{k-1}$. By Lemma~\ref{lem-H}, the size of the largest solvable set of $K_{k-1}$ with $q$ colors is $(k-1)q^{k-2}$. Thus, $|S_i| \le (k-1)q^{k-2} < \frac{1}{2} q^{k-1}$ if $q\ge 2k-1$, and similarly for $T_i$. This implies $\overline{S_i} \cap \overline{T_i} \ne \emptyset$ for all $i$. This contradicts the assumption that the products $\overline{S_1} \times \dots \times \overline{S_n}$ and $\overline{T_1} \times \dots \times \overline{T_n}$ were disjoint, and we are done.
    
    Now we show the lower bound. By Lemma~\ref{lem-windmill-condition}, it suffices to exhibit $q=2k-2$ disjoint sets in $[q]^{(k-1)n}$ that are products of complements of solvable sets of $K_{k-1}$. By Lemma~\ref{c-comp}, there exists a set $C$ such that both $C$ and $\overline{C}$ are solvable sets of $K_{k-1}$ with $q=2k-2$ colors. For convenience, let $C_0 = C$ and $C_1 = \overline{C}$.
    
    For each $0\le x \le q-1$, define $x_i$ to be the $i$-th (least significant) digit of $x$ in binary, and
    \[
    P_x \coloneqq C_{x_1} \times C_{x_2} \times \dots \times C_{x_n}.
    \]
    Note that since $n\ge \log_2 (2k-2)$, we get $q-1 =2k-3 \le 2^n - 1$, and so all of the sets $P_x$ above are disjoint. Since each $C_{x_i}$ is a solvable set of $K_{k-1}$, this completes the proof.
    \end{proof}
To prove Theorem~\ref{thm-wd-dn}, we will construct certain guessing strategies using additive combinatorics.
\begin{definition}\label{def-1-int-under-trans}
    We say that a collection $\{A_1, \dots, A_n\}$ of sets $A_i\subseteq \mathbb{Z}/m\mathbb{Z}$ is {\it difference-disjoint} if $\bigcap_{i=1}^{n}\left( A_i - A_i\right) = \{ 0 \}$. 
\end{definition}

An equivalent definition is that $\{A_1, \dots, A_n\}$ is difference-disjoint if and only if for all $(c_1, \dots, c_n) \in (\mathbb{Z}/m\mathbb{Z})^n$, we have $|\bigcap_{i=1}^{n} \left(A_i + c_i\right)| \le 1$. Indeed, this latter intersection contains at least two elements if and only if there is a pair of elements in each $A_i$ with the same nonzero difference. We proceed by constructing certain optimal difference-disjoint collections.
\begin{lemma} \label{lem-resid}
    For all $n\ge 1$ and $d\ge 2$, there exists a difference-disjoint collection $\{A_1,\ldots, A_n\}$ of $n$ sets $A_i \subseteq \mathbb{Z}/d^n\mathbb{Z}$ with $|A_i| = d^{n-1}$ for $i=1,\ldots, n$.
\end{lemma}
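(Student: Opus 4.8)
The plan is to build the sets $A_i$ coordinate-by-coordinate using the base-$d$ expansion, so that the $i$-th set "ignores" the $i$-th digit. Identify $\mathbb{Z}/d^n\mathbb{Z}$ with $[d]^n$ via the base-$d$ representation $x \leftrightarrow (x^{(1)}, \dots, x^{(n)})$, where $x = \sum_{j=1}^n x^{(j)} d^{j-1}$. For each $i$, let $A_i$ be the set of all $x$ whose $i$-th digit $x^{(i)}$ equals $0$. Then $|A_i| = d^{n-1}$, which is the required size. The intuition is that $A_i - A_i$, under this identification, should consist precisely of those differences whose "$i$-th digit behaves trivially," and intersecting over all $i$ should force every digit to be trivial, leaving only $0$.

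The key step is to verify the difference-disjointness condition $\bigcap_{i=1}^n (A_i - A_i) = \{0\}$. The subtlety is that subtraction in $\mathbb{Z}/d^n\mathbb{Z}$ does not act digitwise because of carrying, so one cannot simply say "$A_i - A_i$ is the set of elements with $i$-th digit zero." Instead I would argue as follows. Suppose $t \in \bigcap_{i=1}^n (A_i - A_i)$ and $t \ne 0$. Write $t$ in base $d$ and let $i$ be the index of the \emph{lowest} nonzero digit of $t$, i.e. $t = t^{(i)} d^{i-1} + (\text{higher order terms})$ with $t^{(i)} \ne 0$. Since $t \in A_i - A_i$, there exist $a, b \in A_i$ with $a - b \equiv t \pmod{d^n}$; both $a$ and $b$ have $i$-th digit $0$. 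Now reduce everything modulo $d^i$: the digits of $a$ and $b$ below position $i$ agree with their true low-order digits, the $i$-th digit of each is $0$, and so $a \bmod d^i$ and $b \bmod d^i$ are integers in $[0, d^{i-1})$ (their digit in position $i$ being zero means they are actually less than $d^{i-1}$, using that positions $1,\dots,i-1$ can be anything but position $i$ is $0$ — wait, more carefully: $a \bmod d^i$ is determined by digits $1, \dots, i$ of $a$, and digit $i$ is $0$, so $a \bmod d^i \in [0, d^{i-1})$). Hence $a \bmod d^i$ and $b \bmod d^i$ both lie in $[0, d^{i-1})$, so their difference lies in $(-d^{i-1}, d^{i-1})$. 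But $t \bmod d^i$ is determined by digits $1, \dots, i$ of $t$, which are $0, \dots, 0, t^{(i)}$, so $t \bmod d^i = t^{(i)} d^{i-1} \ge d^{i-1}$ (as $t^{(i)} \ge 1$). On the other hand $a - b \equiv t \pmod{d^n}$ implies $a \bmod d^i \equiv b \bmod d^i + t^{(i)} d^{i-1} \pmod{d^i}$, i.e. $(a \bmod d^i) - (b \bmod d^i) \equiv t^{(i)} d^{i-1} \pmod {d^i}$. The left side is in $(-d^{i-1}, d^{i-1})$ and the right side's only representative in that range is... none, since $t^{(i)} d^{i-1}$ and $t^{(i)} d^{i-1} - d^i = (t^{(i)} - d)d^{i-1}$ both have absolute value $\ge d^{i-1}$ unless $t^{(i)} = 0$. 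This contradiction shows $t = 0$.

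I expect the main obstacle to be exactly this carrying issue — making rigorous the claim that membership in $A_i - A_i$ constrains the low-order digits correctly despite wraparound. The clean way to package it, which I would adopt in the final writeup, is: for $t \ne 0$ with lowest nonzero digit at position $i$, show $t \notin A_i - A_i$ by the modular argument above; this immediately gives $\bigcap_i (A_i - A_i) \subseteq \{0\}$, and $0$ is trivially in every $A_i - A_i$, so equality holds. Everything else (the size count $|A_i| = d^{n-1}$, and translating to the intersection-of-translates reformulation if desired) is routine.
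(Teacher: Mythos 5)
Your proposal is correct and takes essentially the same approach as the paper: both use the sets $A_i$ of residues whose $i$-th base-$d$ digit is $0$ and show that a nonzero $t$ fails to lie in $A_i - A_i$ for $i$ the position of its lowest nonzero digit. Your reduction modulo $d^i$ is just a cleaner, fully rigorous way of packaging the borrowing argument the paper sketches.
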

\begin{proof}
Let $\digit{d}{x}{i}$ be the $i$-th digit of $x$ in base $d$, where $\digit{d}{x}{0}$ is the least significant digit; that is $\digit{d}{x}{i} = \left\lfloor{\frac{x}{d^i}}\right\rfloor - d\left\lfloor{\frac{x}{d^{i+1}}}\right\rfloor$. Let $\ruler{d}{x}$ be the largest power of $d$ that divides $x$, or equivalently the number of trailing zeros in the base-$d$ representation of $x$.

With this notation, let $A_i = \{ x \mid \digit{d}{x}{i-1} = 0\}$ for $i = 1,\ldots, n$. We claim that for each nonzero $a \in \mathbb{Z}/d^n\mathbb{Z}$, $a \notin \left(A_{\ruler{d}{a}+1} - A_{\ruler{d}{a}+1}\right)$. Indeed, suppose $a = x-y$ for $x, y \in A_{\ruler{d}{a}+1}$, so that $\digit{d}{x}{\ruler{d}{a}} = \digit{d}{y}{\ruler{d}{a}} = 0$. In this situation, it is impossible for $\digit{d}{x-y}{\ruler{d}{a}} = 0$ unless $\digit{d}{x-y}{t} \neq 0$ for some $t < \ruler{d}{a}$. But this would imply $\ruler{d}{x-y} < \ruler{d}{a}$, which is a contradiction. Thus, the sets $A_i$ form a difference-disjoint collection as desired.
\end{proof}

We can now prove Theorem~\ref{thm-wd-dn}. Recall the statement: for $n\ge 1$ and $d\ge 2$, 
\[
\HG(W_{d^n-d^{n-1}+1,n}) = d^n.
\]
Our construction is a generalization of a strategy suggested by Alweiss~\cite{alweiss} for $W_{3,2}$.

\begin{proof}[Proof of Theorem~\ref{thm-wd-dn}]
    Let $k = d^n - d^{n-1} + 1$ and $q=d^n$. We prove separately that $\HG(W_{k,n}) \le q$ and $\HG(W_{k,n}) \ge q$.

    First we show $\HG(W_{k,n}) \le q$. Suppose instead that $\HG(W_{k,n}) \ge q+1$. By Lemma~\ref{lem-windmill-condition} there must exist $q+1$ disjoint sets $P_1,\ldots, P_{q+1}$ in $[q+1]^{(k-1)n}$, where each $P_j$ is of the form
    \[
    \overline{S_1} \times \overline{S_2} \times \cdots \times \overline{S_n}
    \]
    where $S_i$ is a solvable set of $K_{k-1}$ with $q+1$ colors. By Lemma~\ref{lem-H}, any such solvable set has size at most $(k-1)(q+1)^{k-2}$, and so $|S_i| \le (k-1)(q+1)^{k-2}$. Hence
    \[
    |P_j| = \prod_{i=1}^{n} |\overline{S_i}| \ge ((q-k)(q+1)^{k-2})^n.
    \]
    We claim that this is impossible because the sets $P_j$ are simply too large to all fit inside $[q+1]^{(k-1)n}$. Indeed,
    \[
    (q+1)|P_j| \ge (q+1)^{(k-2)n+1}\cdot (q-k)^n = (q+1)^{(k-2)n + 1} \cdot (d^{n-1}+1)^n > (q+1)^{(k-1)n}.
    \]
    In the last line, we used the inequality $(d^{n-1}+1)^n > (d^n + 1)^{n-1}$ which holds for all $d\ge 2$ and $n\ge 1$. This completes the proof that $\HG(W_{k,n}) \le q$.
    
    We finish by showing $\HG(W_{k,n}) \ge q$. Identify the set of colors $[q]$ with the elements of $\mathbb{Z}/q\mathbb{Z}$, and let $\{A_1,\ldots, A_n\}$ be the difference-disjoint collection in $\mathbb{Z}/q\mathbb{Z}$ constructed by Lemma~\ref{lem-resid}, so that $|A_i| = d^{n-1}$ for each $i$. For any set of residues $A\subseteq \mathbb{Z}/q\mathbb{Z}$, define $S(A)$ to be the set
    \[
    S(A)\coloneqq \{ (x_1,\ldots, x_{k-1}) \in [q]^{k-1} \mid x_1 + \cdots + x_{k-1} \not \in A\}
    \]
    of all hat assignments to $K_{k-1}$ whose sum is not in $A$.
    
    Our first claim is that for any set $A$ with $|A| = d^{n-1}$, $S(A)$ is a solvable set of $K_{k-1}$. Indeed, $S(A)$ consists of all hat assignments with sum in $\overline{A}$, which has size $q - |A| = d^n - d^{n-1} = k-1$. This set is solvable because we can assign each element $b_i \in \overline{A}$ to a distinct vertex $i$ of $K_{k-1}$ and have vertex $i$ guess the color that would make the total sum of the hat colors $b_i$.
    
    Now, define $q=d^n$ sets $P_0,\ldots, P_{q-1}$ by
    \[
    P_j \coloneqq \overline{S(A_1 + j)} \times \overline{S(A_2 + j)} \times \cdots \times \overline{S(A_n + j)}.
    \]
    Here $A_i+j$ denotes the$\pmod q$-translation of the set $A_i$ by $j$. Since $S(A_i + j)$ is solvable in $K_{k-1}$, it remains to show that the sets $P_j$ are all disjoint in order to apply Lemma~\ref{lem-windmill-condition}. If not, there would exist distinct $j,j' \in [q]$ such that $(x_1,\ldots, x_n) \in P_j \cap P_{j'}$ for some vector $(x_1,\ldots,x_n) \in [q]^{(k-1)n}$. Equivalently, writing $\sigma(x_i)$ for the sum of the coordinates of $x_i$, this means that $\sigma(x_i) \in (A_i +j) \cap (A_i + j')$ for every $i$. But then $j' - j \in A_i - A_i$ for every $i$, which contradicts the fact that the sets $A_i$ form a difference-disjoint collection.
    
    This completes the construction of $q$ disjoint sets $P_0,\ldots, P_{q-1}$ satisfying the conditions of Lemma~\ref{lem-windmill-condition}, and proves that $\HG(W_{k,n}) \ge q$ as desired.
\end{proof}

\section{Concluding Remarks} \label{sec-concl}
Gadouleau and Georgiou proved that $\HG\left(K_{n, n}\right) \leq n + 1$~\cite{gadgeor}. This is tight for $n = 1$ and $n = 2$. However, in this paper, we proved that $\HG\left(K_{3, 3}\right) = 3$. It remains an interesting open question to determine the value of $\HG\left(K_{n, n}\right)$ for $n > 3$. We conjecture the following generalization of Theorem~\ref{thm-k33}.
\begin{conjecture}
For $n \geq 3$, $\HG\left(K_{n,n}\right) \leq n$.
\end{conjecture}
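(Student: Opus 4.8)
The plan is to attack the conjecture through the same cube-packing reformulation that underlies Theorem~\ref{thm-k33}. The general version of Lemma~\ref{lemma-cube-condition}, which is part of the complete-bipartite machinery of~\cite{alon2018hat}, says that $\HG(K_{n,n}) \ge n+1$ if and only if there exist $n$ partitions $P^{(1)}, \dots, P^{(n)}$ of $[n+1]^n$, each into $n+1$ parts, such that every transversal union $P^{(1)}_{i_1} \cup \cdots \cup P^{(n)}_{i_n}$ contains a combinatorial cube of side $n$ (that is, the complement of a cross $\overline{\{p_1\}} \times \cdots \times \overline{\{p_n\}}$). The conjecture is the statement that no such system of partitions exists for $n \ge 3$; the case $n = 3$ is Theorem~\ref{thm-k33}, so the content is $n \ge 4$.

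Mirroring the structure of the $K_{3,3}$ argument, I would proceed in three stages. (i) \emph{Balance.} Fixing $P^{(1)}_i$, $P^{(2)}_j$ and $P^{(t)}_1$ for $t = 4, \dots, n$, the set $W = P^{(1)}_i \cup P^{(2)}_j \cup P^{(4)}_1 \cup \cdots \cup P^{(n)}_1$ must contain the two-intersection of the $n+1$ side-$n$ cubes obtained by sliding the $P^{(3)}$-part, so $W$ cannot be small; combined with pigeonhole on part sizes, this should force every $P^{(t)}$ to be nearly balanced, with all parts close to $(n+1)^{n-1}$. (ii) \emph{Structure.} As in the claims preceding Lemma~\ref{lemma-four-prisms-impossible}, deduce that in the balanced regime every part is contained in a prism of the form $2 \times n \times \cdots \times n$. (iii) \emph{Tiling.} Generalize Lemma~\ref{lemma-four-prisms-impossible} to show that $n+1$ such thin prisms cannot partition $[n+1]^n$, by passing to a coordinate slice avoided by two of the prisms and inducting on the dimension.

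Two ingredients do not come for free, and I expect the main obstacle to be the first. Stages (i) and (ii) both need higher-dimensional analogues of Lemmas~\ref{lemma-cube-or-minusPoint} and~\ref{lemma-three-cubes}: sharp lower bounds on the size of the two-intersection of $m$ side-$n$ combinatorial cubes in $[n+1]^n$, together with a description of the extremal configurations. For $n = 3$ these were verified by a finite case check, but the number of configurations of $n+1$ side-$n$ cubes grows far too fast for brute force, and I do not see a clean inductive description of the extremal configurations. Worse, the quantitative situation tightens with $n$: the set $W$ above is a union of $n-1$ parts and so can have density close to $1$, so extracting the prism structure forces one to prove that the two-intersection of $n+1$ side-$n$ cubes is even larger --- a far more delicate statement than the ``$\ge 26$ out of $64$'' bound that already lies at the heart of the $n = 3$ argument. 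Establishing such a bound --- equivalently, understanding exactly how many points can lie in at most one of $n+1$ given side-$n$ cubes --- seems to demand a genuinely new idea rather than more bookkeeping.

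A sensible intermediate goal is therefore to settle $n = 4$ (and perhaps $n = 5$) first, by a careful and possibly computer-assisted analysis of the intersection patterns of five side-$4$ cubes in $[5]^4$ --- the configuration space is small enough after symmetry reduction, and the numerics appear to fall the right way --- and in parallel to look for a softer argument, such as an entropy or polynomial-method bound that directly limits how large all $(n+1)^n$ transversal unions can simultaneously be forced to be, which would settle the conjecture for all $n \ge 3$ uniformly.
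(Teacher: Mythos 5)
There is an important mismatch here: the statement you are addressing is stated in the paper as a \emph{conjecture}, not a theorem. The paper proves only the case $n=3$ (Theorem~\ref{thm-k33}) and explicitly leaves $n\ge 4$ open, so there is no proof in the paper to compare against. Your submission is, by your own account, a research plan rather than a proof, and it does not close the conjecture. To your credit, you are candid about exactly where it falls short: stages (i) and (ii) require sharp lower bounds on the two-intersection of $m$ side-$n$ combinatorial cubes in $[n+1]^n$ together with a classification of near-extremal configurations (the analogues of Lemmas~\ref{lemma-cube-or-minusPoint} and~\ref{lemma-three-cubes}), and these are supplied for $n=3$ only by finite case checks that do not scale. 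Your reduction to ruling out $n+1$ colors is sound, since the Gadouleau--Georgiou bound $\HG(K_{n,n})\le n+1$ means that excluding a winning strategy with $n+1$ colors yields the conjecture; and your statement of the general cube-packing criterion is the right generalization of Lemma~\ref{lemma-cube-condition}.

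The gap you identify is not merely one of bookkeeping; it is plausibly fatal to a direct transplant of the $K_{3,3}$ argument. In the $n=3$ proof, the leverage comes from the fact that $P_1\cup Q_1$ has at most $28$ of the $64$ points, sitting just below the $29$-point threshold of Lemma~\ref{lemma-cube-or-minusPoint}. In your stage (i), the fixed set $W$ is a union of $n-1$ parts drawn from $n-1$ different partitions, so even in the balanced case it has density about $(n-1)/(n+1)$, which tends to $1$. For the argument to force balance, the minimum possible two-intersection of $n+1$ side-$n$ cubes would have to exceed this density, and there is no evidence that it does; if it does not, the pigeonhole step yields nothing and the prism structure in stage (ii) cannot be extracted. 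So beyond the combinatorial explosion of the case analysis, the quantitative skeleton of the argument may simply not generalize. Your suggested fallback --- a computer-assisted attack on $n=4$, or a softer entropy/polynomial-method bound on how large all transversal unions can simultaneously be --- is a reasonable direction, but as it stands the conjecture remains open and your proposal should be presented as such.
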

The windmill graph $W_{4,3}$ has hat-guessing number 6, disproving the conjecture that all planar graphs have hat-guessing number at most $4$ from~\cite{bosek2019hat}. He and Li~\cite{he2020hat} previously gave another planar graph with a hat-guessing number of 6, namely $B_{2,n}$ for sufficiently large $n$. Recently, \cite{alonnew} constructed a planar graph with a hat-guessing number of 12. It remains open whether the hat-guessing number of planar graphs is bounded.
\begin{question}
Do there exist planar graphs with arbitrarily large hat-guessing number?
\end{question}
Since planar graphs have a Hadwiger number (largest clique minor) of at most 4, a more general question is whether the hat-guessing number is upper bounded by some function of the Hadwiger number.
\begin{question}
Is there a function $f$ such that $\HG(G) \le f(h(G))$, where $h(G)$ is the Hadwiger number of $G$?
\end{question}
All of our results support the following conjecture about the upper bound of all graphs in terms of the maximum degree $\Delta$. This conjecture, first proposed in \cite{alon2018hat}, tightens the folklore upper bound of $e\Delta$ given by the Lov\'asz Local Lemma.
\begin{conjecture}
$\HG(G) \leq \Delta + 1$.
\end{conjecture}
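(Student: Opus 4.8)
The plan here is a research program rather than a proof: the conjecture is wide open, and I will describe the route I would attempt together with the point where it stalls. I would begin with two elementary reductions. First, $\HG$ is monotone under taking subgraphs — adding edges only helps the guessers, and deleting a vertex never hurts them — so it suffices to prove $\HG(G)\le\Delta+1$ when $G$ is edge-maximal subject to $\Delta(G)=\Delta$; the extremal case, where the bound is attained, is a disjoint union of copies of $K_{\Delta+1}$. Second, I would set up the local bookkeeping used throughout Sections~\ref{sec-books} and \ref{sec-windmills}: in a winning strategy on $q$ colors, the set $S_v\subseteq[q]^{V(G)}$ of colorings on which vertex $v$ guesses correctly is the graph of a function of the coordinates indexed by $N(v)$, so it depends on only $\deg(v)$ coordinates, and these sets cover $[q]^{V(G)}$.

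The heart of the plan would be to produce a weighting $\mu$ of $[q]^{V(G)}$, concentrated near a carefully chosen ``worst-case'' coloring, for which one can show $\mu(S_v)$ is small whenever $\deg(v)\le q-2$; summing over $v$ and comparing with $\mu([q]^{V(G)})$ would then force $q\le\Delta+1$. The model results to imitate are the coverability / numerical-coverability equivalence of Lemma~\ref{lemma-cov-iff-numcov} and the projection bound underlying Lemma~\ref{lem-H}: both formalize that a set which is the graph of a function over a small index set cannot be spread out relative to its coordinate projections, and it is precisely a general, weighted version of this subadditivity-under-projection phenomenon that one would want $\mu$ to witness.

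The main obstacle — and the reason the problem has resisted attack — is that no existing technique yields even $\HG(G)\le C\Delta$ for an absolute constant $C$: the Lov\'asz Local Lemma, applied to the events ``$v$ guesses correctly'', treats them as generic events whose dependencies span distance-two neighborhoods and therefore gives only the folklore bound recalled above. Reaching the conjectured $\Delta+1$ appears to require genuinely exploiting that each bad event is a single function graph — for instance through a Moser--Tardos or entropy-compression analysis tailored to such events — rather than a union bound or the vanilla local lemma. As stepping stones that already look new, I would first attempt (i) $\Delta$-regular triangle-free graphs, where $N(v)\cap N(w)$ is heavily constrained and the dependency structure is sparse, and (ii) Cayley graphs of abelian groups, where the guessing functions invite a Fourier-analytic treatment in the spirit of the difference-disjoint construction of Lemma~\ref{lem-resid}. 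Even establishing $\HG(G)=(1+o(1))\Delta$ for these families would, I expect, absorb most of the effort; removing the lower-order slack to reach the exact value $\Delta+1$, matched by $K_{\Delta+1}$, is where the real difficulty lies.
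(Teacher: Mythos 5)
The statement you were asked to address is an open conjecture: the paper offers no proof of it, explicitly attributes it to Alon, Ben-Eliezer, Shangguan, and Tamo, and presents its own results on books, windmills, and $K_{3,3}$ only as evidence consistent with the bound. Your submission is, as you yourself say, a research program rather than a proof, so as a proof attempt it has the ultimate gap: no argument is given that establishes $\HG(G)\le\Delta+1$ for any graph, and the central object of your plan --- the weighting $\mu$ under which each correct-guess set $S_v$ has small measure --- is never constructed. Without a weighting, the bookkeeping you set up (each $S_v$ is the graph of a function of the $\deg(v)$ coordinates in $N(v)$, hence $|S_v|=q^{|V(G)|-1}$, and the $S_v$ cover $[q]^{V(G)}$) yields only the trivial bound $q\le|V(G)|$, and you give no indication of why any local reweighting should do better.

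Two specific assertions in your outline also need correction. First, the claim that ``no existing technique yields even $\HG(G)\le C\Delta$ for an absolute constant $C$'' contradicts the folklore Lov\'asz Local Lemma bound $\HG(G)\le e\Delta$ that the paper quotes in the sentence introducing the conjecture; the open problem is lowering the constant from $e$ to $1$ (plus an additive $1$), not obtaining linearity in $\Delta$. Second, the reduction to edge-maximal graphs of maximum degree $\Delta$ buys essentially nothing: every $\Delta$-regular graph is already edge-maximal subject to $\Delta(G)=\Delta$, and such graphs are far from being disjoint unions of $K_{\Delta+1}$; moreover the bound is attained by graphs other than $K_{\Delta+1}$, e.g.\ $\HG(C_4)=3=\Delta+1$ with $\Delta=2$, so the extremal configuration is not what you describe. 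Your proposed stepping stones (regular triangle-free graphs, abelian Cayley graphs) are sensible places to look, but none of this constitutes a proof, nor does it parallel anything in the paper, which leaves the conjecture open.
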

Books and windmills are both generalizations of the complete graph. Books glue multiple copies of the complete graph together by leaving one vertex unique to each copy. In contrast, windmills glue multiple copies of the complete graph together at exactly one vertex. The case of gluing multiple copies of the complete graph at an intermediate number of vertices remains unexplored.

Perhaps the most interesting question in hat guessing is whether far-apart vertices can coordinate their guesses in a way that contributes to the hat-guessing number. Almost all graphs studied to date\textemdash including books, windmills, and the complete bipartite graph\textemdash have a diameter of at most 2. Define a graph to be {\it hat-minimal} if every proper subgraph has a smaller hat-guessing number.
\begin{question}
Do there exist hat-minimal graphs with arbitrarily large diameter and hat-guessing number?
\end{question}
Another related question is whether graphs with high girth can have high hat-guessing number.
\begin{question}
Do there exist graphs with arbitrarily large girth and hat-guessing number?
\end{question}
It seems that the fundamental roadblock to answering either question is the absence of guessing strategies in which far-away vertices can coordinate effectively. The only graphs with higher diameter or girth for which anything interesting is known are cycles, for which the hat guessing number is at most 3~\cite{szczechla}, and graphs with more than one cycle, for which the hat guessing number is at least 3~\cite{KL}. It would already be interesting to find hat-minimal graphs with hat-guessing number $4$ and arbitrarily large girth or diameter.

\section{Acknowledgments}
We would like to thank Pawel Grzegrzolka and the Stanford Undergraduate Research Institute in Mathematics, at which this research was conducted. We are also grateful to Ryan Alweiss for an idea that simplified our exposition on windmill graphs, and to Noga Alon, Jacob Fox, Jarek Grytczuk, Zhuoer Gu, Ben Gunby, and Ray Li for many stimulating conversations.
\newpage

\section{Appendix}
Here we prove Lemmas~\ref{lemma-cube-or-minusPoint} and \ref{lemma-three-cubes}. In both proofs it will be helpful to study sets which are complements of $3\times 3\times 3$ cubes.

\begin{definition} If $p\in [4]^3$, the \emph{big Hamming ball about $p$}, denoted $\bHb(p)$, is the set of all points sharing at least one coordinate with $p$. 
\end{definition}

Note that $\bHb(p)$ is always the complement of a $3 \times 3 \times 3$ cube. We first prove Lemma~\ref{lemma-cube-or-minusPoint}, which states that if a two-intersection of four $3 \times 3 \times 3$ cubes contains at most $29$ points, then it must either be a $3 \times 3 \times 3$ cube or a $3 \times 3 \times 3$ cube missing a point.

The relationship of two points can be captured by the following notion.
\begin{definition}
The \emph{Hamming distance} of two points $p$ and $q$, notated $d(p,q)$, is the number of coordinates that $p$ and $q$ disagree on.
\end{definition}
We can then see that for $p \in [4]^3$, $\bHb(p)$ is the set of $q$ such that $d(p,q) < 3$.

\begin{proof}[Proof of Lemma~\ref{lemma-cube-or-minusPoint}]
    Suppose we have four $3 \times 3 \times 3$ cubes $C_1 = \overline{\bHb(p_1)}$, $C_2 = \overline{\bHb(p_2)}$, $C_3 = \overline{\bHb(p_3)}$, and $C_4 = \overline{\bHb(p_4)}$. We condition on the set of four points $\{p_1, p_2, p_3, p_4\}$. We can assume without loss of generality that $p_1 = (0,0,0)$. Further, if $p_i = p_j$ with $i \neq j$, then the two-intersection is either a $3 \times 3 \times 3$ cube or contains at least $31$ points (a $3 \times 3 \times 3$ cube with at least $4$ extra two-intersections), which is in accordance with our lemma. We now assume that no two of the four points are equal.
    
    First, suppose two of the four points have Hamming distance 1. Without loss of generality, this occurs with $p_2 = (1,0,0)$. If $d(p_1, p_2) = d(p_1, p_3) = d(p_1, p_4) = 1$, then the two-intersection is a $3 \times 3 \times 3$ cube missing a point, unless there are three points in an axis-parallel line, in which case we already have a $4 \times 4 \times 3$ prism, and we are done. We can then assume that $p_3$ has at least $2$ nonzero coordinates. So without loss of generality, $p_3 \in \{(0,1,1), (1,1,0), (2,1,0), (2,1,1)\}$. (These cases are sufficient because with the existing choices of $p_1$ and $p_2$, only the $x$-coordinate is distinct while the $y$- and $z$-coordinates are symmetric, and the values $1$ and $2$ are symmetric with respect to the $y$- and $z$-coordinates.) We consider the cases in turn. (Several of these cases require exhaustively considering the possibilities for $p_4$, although symmetries make the task easier.)
    
    First, if $p_3 = (0,1,1)$, then we can create a two-intersection of size $30$ by setting $p_4 = (0,1,0)$, but one can see that no other choice of $p_4$ achieves a smaller two-intersection.
    
    Second, if $p_3 = (1,1,0)$, then it is possible to achieve a two-intersection of size $26$ by setting $p_4 = (1,0,1)$, and this is the $3 \times 3 \times 3$ cube missing a point as desired. Further, we can get a two-intersection of $30$ by setting $p_4 = (0,0,1)$ or $p_4 = (1,1,1)$, but no smaller two-intersections are possible besides the $3 \times 3 \times 3$ cube missing a point.
    
    Third, if $p_3 = (2,1,0)$, this case can easily be dismissed because the two-intersection is already size $30$.
    
    Fourth, if $p_3 = (2,1,1)$, we can do a two-intersection of size $32$ with $p_4 = (0,1,0)$, but no smaller two-intersection is possible, which is not difficult to see once one notes that the size of the two-intersection is already size $26$ after adding the third $3 \times 3 \times 3$ cube, and the last $3 \times 3 \times 3$ cube must add at least 6 more.
    
    Thus, we have proven the lemma if two points have a Hamming distance of $1$ between them. We now suppose that all pairs of points have a Hamming distance of at least $2$ between them. For $i = 1, \dots, 4$, let $p_i = (x_i, y_i, z_i)$. Then, $C_i = S_i \times \overline{\{z_i\}}$, where $S_i$ is the combinatorial square $\overline{\{x_i\}} \times \overline{\{y_i\}}$. Because any two points $p_i$ and $p_j$ have a Hamming distance of at least 2, we see that they must disagree in at least one of the $x$- or $y$-coordinates, so all of the $S_i$ must be distinct.
    
    One can verify that two $3 \times 3$ squares in $[4]^2$ must two-intersect in at least $4$ points, and three $3 \times 3$ squares must two-intersect in at least $8$ points. Although not as obvious, one can also verify that four distinct $3 \times 3$ squares must two-intersect in at least $12$ points (the minimal two-intersection is achieved with the squares defined by the points $(0,0)$, $(1,0)$, $(0,1)$, $(1,1)$). This implies that given two cubes $C_i = S_i \times \overline{\{z_i\}}$ and $C_j= S_j \times \overline{\{z_j\}}$, the cubes must two-intersect in at least 4 points in the $4 \times 4$ cross section of $[4]^3$ given by a fixed $z$-coordinate in $\overline{\{z_i\}} \cap \overline{\{z_j\}}$. Similarly, given three cubes $C_i = S_i \times \overline{\{z_i\}}$, $C_j= S_j \times \overline{\{z_j\}}$, $C_k= S_k \times \overline{\{z_k\}}$, the cubes must two-intersect in at least 8 points in a cross section given by a fixed $z \in \overline{\{z_i\}} \cap \overline{\{z_j\}} \cap \overline{\{z_k\}}$. Finally, given four cubes $C_i = S_i \times \overline{\{z_i\}}$, $C_j= S_j \times \overline{\{z_j\}}$, $C_k= S_k \times \overline{\{z_k\}}$, and $C_l = S_l \times \overline{\{z_l\}}$, where all of $S_i, S_j, S_k$, and $S_l$ are distinct, the four cubes must two-intersect in at least 12 points in a cross-section given by a fixed $z \in \overline{\{z_i\}} \cap \overline{\{z_j\}} \cap \overline{\{z_k\}} \cap \overline{\{z_l\}}$. 
    
    There are five ways to choose the $z_i$ without loss of generality. 
    \[(z_1, z_2, z_3, z_4) \in \{(0, 0, 0, 0), (1, 0, 0, 0), (1, 1, 0, 0), (2, 1, 0, 0), (3, 2, 1, 0)\}\]  
    We will rewrite this information in the following way: let $n_i$ be the number of sets among $\overline{\{z_1\}}, \overline{\{z_2\}}, \overline{\{z_3\}}, \overline{\{z_4\}}$ that contain $i$. Then, 
    \[(n_1, n_2, n_3, n_4) \in \{(0,4,4,4), (1,3,4,4), (2,2,4,4), (2,3,3,4), (3,3,3,3)\}\]
    Using the facts from the previous paragraph, this gives us a lower bound for the total number of two-intersection points for each of these arrangements. For instance, in the arrangement $(0, 4, 4, 4)$, the four distinct $3 \times 3 \times 3$ cubes $C_1$, $C_2$, $C_3$, and $C_4$ all share the $z$-coordinates $1$, $2$, and $3$. Since these four $3 \times 3 \times 3$ cubes must two-intersect in at least 12 points for a fixed $z$-coordinate, there are at least $3 \cdot 12 = 36$ two-intersections total. The other cases can be analyzed similarly; the results are summarized below.
    
    \begin{center}
    \begin{tabular}{ |c|c| } 
    \hline
    Arrangement & Number of two-intersections \\
    \hline
    $(0,4,4,4)$ & $\ge 3\cdot 12 = 36$ \\
    \hline
    $(1,3,4,4)$ & $\ge 2\cdot 12 + 8 = 32$ \\
    \hline
    $(2,2,4,4)$ & $\ge 2\cdot 12 + 2\cdot 4 = 32$ \\
    \hline
    $(2,3,3,4)$ & $\ge 12 + 2\cdot 8 + 4 = 32$ \\
    \hline
    $(3,3,3,3)$ & $\ge 4\cdot 8 = 32$ \\
    \hline
    \end{tabular}
    \end{center}
    
    Since there are at least $30$ two-intersection points in all of these cases, none of them can be a counterexample to the lemma, so the lemma is proved.
\end{proof}

We now prove Lemma~\ref{lemma-three-cubes}, which states that three $3 \times 3 \times 3$ cubes in $[4]^3$ must two-intersect at a minimum of 20 points.
\begin{proof}[Proof of Lemma~\ref{lemma-three-cubes}]
Call the $3 \times 3 \times 3$ cubes $C_1 = \overline{\bHb(p_1)}$, $C_2 = \overline{\bHb(p_2)}$, and $C_3 = \overline{\bHb(p_3)}$, where $p_1, p_2, p_3 \in [4]^3$. Let $x_1$, $x_2$, and $x_3$ represent the number of distinct $x_1$-, $x_2$-, and $x_3$-coordinates among $p_1$, $p_2$, and $p_3$, respectively. Let $I$ be the two-intersection of $C_1, C_2,$ and $C_3$. By inclusion-exclusion,
\[ \lvert I \rvert = \lvert C_1 \cap C_2\rvert + \lvert C_2 \cap C_3 \rvert  + \lvert C_1 \cap C_3 \rvert - 2 \lvert C_1 \cap C_2 \cap C_3 \rvert \]
We see that a point is in $C_1 \cap C_2 \cap C_3$ if and only if none of its coordinates is used by $p_1$, $p_2$, or $p_3$. Thus $\lvert C_1 \cap C_2 \cap C_3 \rvert = (4-x_1)(4-x_2)(4-x_3)$.

Let $d_1 = d(p_1, p_2)$, $d_2 = d(p_2, p_3)$, and $d_3 = d(p_1, p_3)$ be the Hamming distances between each pair of points. Then, in an $x_1$-, $x_2$-, or $x_3$-coordinate where $p_1$ and $p_2$ agree, points in $C_1 \cap C_2$ take on the three values which are not the one $p_1$ and $p_2$ use. In a coordinate where $p_1$ and $p_2$ disagree, points in $C_1 \cap C_2$ take on the two values which are used by neither $p_1$ nor $p_2$. Thus $\lvert C_1 \cap C_2 \rvert = 3^{3-d_1} 2^{d_1}$. By the same argument, $\lvert C_2 \cap C_3 \rvert = 3^{3 - d_2} 2^{d_2}$ and $\lvert C_1 \cap C_3 \rvert = 3^{3 - d_3} 2^{d_3}$. Then,
\begin{align*}
    \lvert I \rvert &= \sum_{i = 1}^3 3^{3 - d_i} 2^{d_i} - 2(4-x_1)(4-x_2)(4-x_3) \\
    &= 27 \sum_{i = 1}^3 \left(\frac{2}{3}\right)^{d_i} - 2(4-x_1)(4-x_2)(4-x_3) \\
    &\geq 27 \cdot 3 \cdot \left(\frac{2}{3}\right)^{(d_1 + d_2 + d_3) / 3} - 2(4-x_1)(4-x_2)(4-x_3),
\end{align*}
where the last line above follows from the arithmetic mean\textendash geometric mean inequality. We now perform a change of variables so $a_j = |\{i: x_i = j\}|$ for $j \in \{1, 2, 3\}$. Then, we see that $d_1 + d_2 + d_3 = 2a_2 + 3a_3$ and $(4-x_1)(4-x_2)(4-x_3) = 3^{a_1} 2^{a_2}$. Substituting gives
\[    \lvert I \rvert \geq 27 \cdot 3 \cdot \left(\frac{2}{3}\right)^{(2a_2 + 3a_3) / 3} - 3^{a_1} 2^{a_2 + 1}.\]
Conditioned on $a_j \in \Z$ and $0 \leq a_j \leq 3$ for $j \in \{1,2,3\}$, with $a_1 + a_2 + a_3 = 3$, calculating all cases shows the minimum of the right hand side is 20, attained when $a_1 = 0$, $a_2 = 3$, and $a_3 = 0$.
\end{proof}
\end{document}